\let\cal\mathcal
\newtheorem{theorem}{Theorem}
\newtheorem{lemma}[theorem]{Lemma}
\newtheorem{sublemma}[theorem]{Sublemma}
\newtheorem{corollary}[theorem]{Corollary}
\newtheorem{proposition}[theorem]{Proposition}
\theoremstyle{remark}
\newtheorem{remark}[theorem]{Remark}
\newtheorem{remarks}[theorem]{Remarks}
\theoremstyle{definition}
\newtheorem{definition}[theorem]{Definition}
\newtheorem{problem}[theorem]{Problem}
\numberwithin{equation}{section}
\numberwithin{theorem}{section}
\def\Z{\mathbb Z}
\def\M{\cal{M}}
\def\H{\cal{H}}
\def\ch{\raise 0.5ex \hbox{$\chi$}}
\def\T{\tau}
\def\E{\cal{E}}
\let\phi\varphi
\let\epsilon\varepsilon
\def\log{\operatorname{log}}
\renewcommand{\a}{\alpha}
\renewcommand{\b}{\beta}
\newcommand{\g}{\gamma}
\renewcommand{\O}{{\Omega}}
\newcommand{\N}{\cal{N}}
\newcommand{\h}{\mathsf{h}}
\newcommand{\tr}{\mbox{\rm tr}}
\newcommand{\ot}{\otimes}
\begin{document}

\title[Martingale inequalities]{Martingale inequalities in noncommutative symmetric spaces}
\author[N. Randrianantoanina]{Narcisse Randrianantoanina}
\address{Department of Mathematics, Miami University, Oxford,
Ohio 45056, USA}
 \email{randrin@miamioh.edu}

\author[L. Wu]{Lian Wu}
 \address{School of Probability and Statistics, Central South University, Changsha 410075, China and  Department of Mathematics, Miami University, Oxford, Ohio 45056, USA}
 \thanks{Wu was partially supported by NSFC(No.11471337) and China Scholarship Council}
 \email{wul5@miamioh.edu}
 
\subjclass[2010]{Primary: 46L53, 46L52. Secondary: 47L05, 60G42}
\keywords{Noncommutative martingales, martingale inequalities,  noncommutative symmetric spaces, Boyd indices, interpolations}


\begin{abstract} 
We provide    generalizations of Burkholder's inequalities involving  conditioned square functions  of martingales to the general context of martingales in  noncommutative symmetric spaces. More precisely, we prove that  Burkholder's inequalities  are valid for any   martingale in noncommutative  space constructed from a  symmetric space defined on the interval $(0,\infty)$ with Fatou property and whose  Boyd  indices are  strictly between 1 and  2.   This answers positively  a question raised by Jiao and may be viewed as a conditioned version of similar inequalities for square functions of noncommutative martingales.
Using duality,  we also recover the  previously known case where the Boyd indices are finite and  are strictly larger than 2.
 \end{abstract}

\maketitle


\section{Introduction}
In  classical martingale theory, a fundamental result due to Burkholder (\cite{Bu1, BG, Ga})
can be described as follows: given a probability space $(\O, \Sigma, P)$, let $\{\Sigma_n\}_{n\geq 1}$ be an increasing sequence of $\sigma$-fields  of $\Sigma$ such that $\Sigma= \bigvee \Sigma_n$ . If  $2\leq p<\infty$ and $f=(f_n)_{n\geq 1}$ is  a $L_p$-bounded martingale adapted to the filtration $\{\Sigma_n\}_{n\geq 1}$, then (using the convention that $\Sigma_0=\Sigma_1$),
\begin{equation}\label{Burkholder}
\sup_{n\geq 1} [\mathbb{E}|f_n |^p]^{1/p} \simeq_p \Big[\mathbb{E} \big( \sum_{n\geq 1} \mathbb{E}[|df_n|^2 |\Sigma_{n-1}] \big)^{p/2} \Big]^{1/p}  + \Big[ \sum_{n\geq 1} \mathbb{E}|df_n|^p \Big]^{1/p},
\end{equation}
where $\simeq_p$ means equivalence of norms up to constants depending only on $p$.
The random variable $s(f)=\big(\sum_{n\geq 1} \mathbb{E}[|df_n|^2|\Sigma_{n-1}] \big)^{1/2}$ is  called the conditioned square function of  the martingale $f$ and the equivalence \eqref{Burkholder} is generally referred to as Burkholder's inequalities.  The equivalence \eqref{Burkholder} was established by Burkholder as  the    martingale difference sequence generalizations  of  Rosenthal's inequalities \cite{Ros} which state that if $2\leq p<\infty$ and $(g_n)_{n\geq 1}$ is a sequence of independent mean-zero random variables  in $L_p(\Omega, \Sigma, P)$ then 
\begin{equation}\label{Rosenthal}
\Big(\mathbb{E}\big| \sum_{n\geq 1} g_n \big|^p\Big)^{1/p} \simeq_p \Big( \sum_{n\geq 1} \mathbb{E}|g_n|^2\Big)^{1/2}  + \Big( \sum_{n\geq 1} \mathbb{E}|g_n|^p \Big)^{1/p}.
\end{equation}
Probabilistic inequalities  involving  independent random variables and martingales inequalities  play important roles in many  different  areas of mathematics.
Burkholder/Rosenthal inequalities  in particular have many applications in probability theory  and   structures  of symmetric spaces in Banach space theory. On the other hand,  a recent trend in the general study of martingale inequalities is to  find  analogues of classical  inequalities  in the context of noncommutative $L_p$-spaces. We refer to
 \cite{PX,Ju,Junge-Perrin,Ran15} for additional  information on noncommutative martingale inequalities. 
Noncommutative analogues of \eqref{Burkholder} and \eqref{Rosenthal} were extensively studied by Junge and Xu in \cite{JX, JX3}. They obtained that if  $2\leq p <\infty$ and $x=(x_n)_{n\geq 1}$ is a noncommutative martingale that is $L_p$-bounded  then
\begin{equation}\label{nc1}
\big\|x\big\|_p \simeq_p \max\Big\{ \big\|s_c(x)\big\|_p,  \big\|s_r(x)\big\|_p, \big( \sum_{n\geq 1} \big\|dx_n\big\|_p^p \big)^{1/p}\Big\}
\end{equation}
where $s_c(x)$ and $s_r(x)$ denote the column version and the row version  of  conditioned square functions which we refer to the next section for formal definitions. Moreover, they also treated  the corresponding inequalities for  the range $1<p<2$ which are  dual versions of \eqref{nc1} and read as follows: if $x=(x_n)_{n\geq 1}$ is a noncommutative martingale in $L_2(\M)$ then 
\begin{equation}\label{nc2}
\big\|x \big\|_p \simeq_p \inf\Big\{ \big\|s_c(y)\big\|_p +  \big\|s_r(z)\big\|_p + \big( \sum_{n\geq 1} \big\|dw_n\big\|_p^p \big)^{1/p}\Big\}
\end{equation}
where  the infimum is taken over all $x=y+z +w$  with $y$, $z$, and $w$ are martingales. The differences between the two cases $1<p<2$ and $2\leq p<\infty$ are now well-understood in the field. 
In \cite{Jiao1},   inequalities \eqref{nc1} and \eqref{nc2}  were extended to the case of noncommutative Lorentz  spaces $L_{p,q}(\M)$ for $1<p<\infty$ and $1\leq q<\infty$.
Motivated by this extension, it is natural  to ask if some versions of noncommutative Burkholder's inequalities remain valid in the general context of noncommutative  symmetric spaces. This question was explicitly raised in  \cite[Problem~3.5]{Jiao2}.
Martingale inequalities in the general framework of rearrangement invariant spaces  have long been of interests. For the case of classical martingales, 
we refer reader  to the work of Johnson and Schechtman  \cite{JS1, JS2} and the references therein. For the noncommutative settings, we recall that 
  generalizations of Burkholder-Gundy  inequalities in noncommutative symmetric spaces
were recently established  in \cite{Dirk-Pag-Pot-Suk,Jiao2}, extensions of Junge's noncommutative Doob maximal  inequalities in some symmetric spaces were treated in \cite{Dirksen}.
In a closely related topic, Le Merdy and Sukochev  studied  Rademacher averages on noncommutative symmetric spaces (\cite{LeM-Suk}). These Rademacher averages turn out to  provide  one of the key ingredients  in the solution of Burkholder-Gundy  inequalities in noncommutative symmetric spaces in  \cite{Dirk-Pag-Pot-Suk,Jiao2}.    Naturally, the concept of Boyd indices of symmetric  spaces (\cite{LT}) and various interpolation techniques play significant roles in all the results stated above.

The present paper solves the problem discussed above.  Our main result can be summarized as follows: assume that $E$ is a rearrangement invariant function space on $(0,\infty)$ that satisfies some natural conditions and has nontrivial Boyd indices $1<p_E\leq q_E<\infty$ and $\M$ is a semifinite von Neumann algebra equipped with a faithful normal semifinite trace $\T$. We obtain generalizations of \eqref{nc1} and \eqref{nc2} that read:

If $1<p_E \leq q_E<2$, then 
 \begin{equation}\label{nc3}
\big\|x \big\|_{E(\M)} \simeq_E \inf\Big\{ \big\|s_c(y)\big\|_{E(\M)} +  \big\|s_r(z)\big\|_{E(\M)} +   \big\| (dw_n)_{n\geq 1}\big\|_{E(\M \overline{\otimes} \ell_\infty)} \Big\}
\end{equation}
where as in \eqref{nc2}, the  infimum is taken over all  decompositions $x=y+z +w$  with $y$, $z$, and $w$ are martingales in $E(\M,\T)$.

If $2<p_E\leq q_E<\infty$, then 
\begin{equation}\label{nc4}
\big\|x\big\|_{E(\M)}\simeq_E \max\Big\{ \big\|s_c(x)\big\|_{E(\M)},  \big\|s_r(x)\big\|_{E(\M)},  \big\| (dx_n)_{n\geq 1}\big\|_{E(\M \overline{\otimes} \ell_\infty)}\Big\}.
\end{equation}

We note  that  \eqref{nc4} was   recently established  by Dirksen in \cite{Dirksen2}. His approach  follows closely the  original arguments  used in \cite{JX} taking advantage of the fact   mentioned earlier that the noncommutative Burkholder-Gundy inequalities for square functions are valid for noncommutative martingales in some general symmetric spaces. Thus, our main motivation is  primarily to establish  the equivalence \eqref{nc3}.

Our approach is based  on another discovery  made in the next section that, in some sense, one inequality in  the equivalence \eqref{nc2}  can be achieved with a decomposition that works simultaneously for all $1<p<2$. We refer to Theorem~\ref{simultaneous} below for  more information.  This simultaneous decomposition allows us to efficiently apply results from interpolation theory.  Namely,  we use concrete realization of noncommutative symmetric spaces as interpolations of noncommutative $L_p$-spaces by means of $K$-functionals and $J$-functionals. The non-trivial inequality 
in the equivalence \eqref{nc4} will be deduced from \eqref{nc3} using duality. Unlike the $L_p$-cases, this duality  technique does not seem to apply  for the other direction. That is, at the time of this writing, we lack   necessary  ingredients  to deduce \eqref{nc3} from \eqref{nc4}.

The paper is organized as follows: in Section~2, we  provide  some preliminary results concerning noncommutative symmetric spaces, interpolation theory, and  martingale inequalities.  In particular,  we establish  a decomposition result  for noncommutative martingales that sets up the use  of interpolations.
Section~3 is devoted entirely to the statement  and proof of our main result. In the last section, we discuss some related results,  provide examples, and  point to  related open questions  concerning Burkholder's inequalities.

Our notation and terminology are standard  as may be found in the books \cite{BENSHA,LT, TAK}.


\section{Definitions and preliminary results}

\subsection{Noncommutative spaces}

In this subsection, we review some basic facts on rearrangement invariant spaces and their noncommutative counterparts  that are relevant for our presentation.

For   a semifinite von Neumann algebra $\M$ equipped with a faithful normal semifinite trace $\T$, let $\widetilde{\M}$ denote the topological $*$-algebra of all measurable operators with respect to $(\M,\T)$ in the sense of \cite{N}.  For $x \in \widetilde{\M}$, define its  generalized singular number by 
\[
\mu_t(x)=\inf\{\lambda>0; \T( e^{|x|}(\lambda, \infty))\leq t \}, \quad t>0
\]
where $e^{|x|}$ is the spectral measure of  $|x|$.
The function $t \mapsto \mu_t(x)$ from $(0,\infty)$ into $[0,\infty)$ is right-continuous and nonincreasing (\cite{FK}). For the case where $\M$ is the abelian von Neumann algebra $L_\infty(0,\infty)$ with  the trace given by  integration  with respect to the Lebesgue  measure, $\widetilde{\M}$  becomes  the linear space of all measurable functions $L_0(0,\infty)$ and $\mu(f)$ is the decreasing rearrangement of  the function $|f|$ in the sense of \cite{LT}.

 We recall that a Banach  function space $(E, \|\cdot\|_E)$ on $(0,\infty)$ is  called
 \emph{symmetric} if for any $g \in E$ and any measurable function $f$ with $\mu(f) \leq \mu(g)$, we have $f\in E$  and $\|f\|_E \leq \|g\|_E$.  The K\"othe dual of  a symmetric space $E$ is the function space defined by setting:
 \[
 E^\times=\Big\{ f \in L_0(0,\infty): \int_0^\infty |f(t)g(t)|\  dt <\infty, \forall g \in E\Big\}.
 \] 
When equipped with the norm $\|f\|_{E^\times}:=\sup\{\int_0^\infty |f(t)g(t)|\ dt : \|g\|_E \leq 1\}$, $E^\times$ is a symmetric Banach function space.
  
 The symmetric Banach function space $E$ is said to have the \emph{Fatou property}  
 if, whenever $0\leq f_\alpha \uparrow_\alpha \subseteq E$ is an upwards directed net with $\sup_\alpha \|f_\alpha\|_E <\infty$, it follows that $f=\sup_\alpha f_\alpha$ exists in $E$ and $\|f\|_E=\sup_\alpha\|f_\alpha\|_E$.
  It is well-known that $E$ has the Fatou property if and only if the natural  embedding of $E$ into its K\"othe bidual  $E^{\times\times}$ is a surjective isometry. Examples of symmetric spaces with the Fatou property are separable symmetric  spaces and  duals of separable symmetric spaces.
 
 Another concept  that is central to the paper is  the notion of Boyd indices which we now  introduce. Let $E$ be a symmetric Banach space
on $(0,\infty)$. For $s>0$, the dilation operator $D_s: E \to E$
is defined by setting
\begin{equation*}
D_sf(t)=f(t/s), \qquad t>0, \qquad f \in E.
\end{equation*}
The \emph{ lower and upper Boyd indices } of $E$ are defined by
\begin{equation*}
p_E :=\lim_{s\to \infty}\frac{\log s}{\log\Vert D_{1/s}\Vert}\  \text{and}\  
q_E :=\lim_{s\to 0^+}\frac{\log s}{\log\Vert D_{1/s}\Vert},
\end{equation*}
respectively. It is well-known that $1\leq p_E \leq q_E \leq \infty$ and if $E=L_p$  for $1\leq p \leq
\infty$ then $p_E=q_E=p$. 
 We shall
say that $E$ has non-trivial Boyd indices whenever $1<p_E \leq q_E<\infty$.
  We refer to \cite{BENSHA, LT} for any unexplained terminology from  function space theory.
 
For a given symmetric Banach function space  $(E, \|\cdot\|_E)$ on  the
interval $(0, \infty)$, we define the  corresponding  noncommutative space by setting:
\begin{equation*}
E(\M, \T) = \big\{ x \in
\widetilde{\M}\ : \ \mu(x) \in E \big\}. 
\end{equation*}
Equipped with the norm
$\|x\|_{E(\M,\T)} := \| \mu(x)\|_E$, the space  $E(\M,\T)$ is a complex Banach space (\cite{Kalton-Sukochev}) and is referred to as the \emph{noncommutative symmetric space} associated with $(\M,\T)$ corresponding to  the function space $(E, \|\cdot\|_E)$. 
 We remark that 
 if
$1\leq p<\infty$ and $E=L_p(0, \infty)$ then $E(\M, \T)=L_p(\M,\T)$ is  the usual noncommutative $L_p$-space associated with $(\M,\T)$. 

Recall that a  linear operator $T: X \to Y$ is called a \emph{semi-embedding} if  $T$ is one to one and $T(B_X)$ is a closed subset of $Y$ where $B_X=\{x\in X: \|x\| \leq 1\}$.
As in the commutative case,  if $1\leq p <  p_E \leq q_E <q \leq  \infty$ then the space $E(\M,\T)$ is intermediate to the spaces $L_p(\M,\T)$ and $L_q(\M,\T)$ in the sense that
\[
L_p(\M, \T) \cap  L_q(\M,\T) \subseteq E(\M,\T) \subseteq L_p(\M,\T) + L_q(\M, \T)
\]
with the inclusion maps being continuous. Moreover, if $E$ satisfies  the Fatou property, one can readily  verify that the second  inclusion  map  $E(\M,\T) \hookrightarrow L_p(\M,\T) + L_q(\M,\T)$ is a semi-embedding. These facts will be used in the sequel.

We end this subsection with  the following elementary lemma.  It will be used in the proof of our main result. We include a proof for completeness. 
\begin{lemma}\label{approximation2} Assume that $1<p<q<2$ and let $u \in L_p(\M) \cap L_q(\M)$. There exists a sequence $(u_m)_{m\geq 1}$ in $L_1(\M) \cap L_2(\M)$  with  $\lim_{m\to \infty}\|u_{m}-u\|_{L_p(\M) \cap L_q(\M)}=0$  and both sequences $(\|u_m\|_p)_{m\geq 1}$ and 
$(\|u_m\|_q)_{m\geq 1}$ are increasing and converge to $\|u\|_p$ and $\|u\|_q$,  respectively. 
\end{lemma}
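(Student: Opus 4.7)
The plan is to take $(u_m)$ to be a spectral truncation of $u$. Let $u = v|u|$ be the polar decomposition of $u$ and, for each integer $m \geq 1$, let $e_m := e^{|u|}([1/m,m])$ denote the spectral projection of $|u|$ corresponding to the interval $[1/m,m]$. Set $u_m := v|u|e_m$. Because $e_m$ commutes with $|u|$, a direct computation gives
$u_m^{*}u_m = e_m|u|v^{*}v|u|e_m = e_m|u|^{2}e_m = |u|^{2}e_m$,
and hence $|u_m| = |u|e_m$ and $|u_m|^{r} = |u|^{r}e_m$ for every $r>0$.

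I would first check that $u_m$ belongs to $L_1(\M)\cap L_2(\M)$. On the range of $e_m$ the operator $|u|$ is bounded below by $1/m$, which yields the operator inequality $e_m \leq m^{p}|u|^{p}e_m$; applying $\T$ gives $\T(e_m) \leq m^{p}\|u\|_p^{p} < \infty$. Combined with the bounds $|u|e_m \leq me_m$ and $|u|^{2}e_m \leq m^{2}e_m$, this gives $\|u_m\|_1 \leq m\T(e_m) < \infty$ and $\|u_m\|_2^{2} \leq m^{2}\T(e_m) < \infty$, so that $u_m \in L_1(\M)\cap L_2(\M)$ as required.

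The monotonicity and convergence of the norms is then the heart of the argument. For $r \in \{p,q\}$ one has $\|u_m\|_r^{r} = \T(|u|^{r}e_m)$. The projections $(e_m)$ form an increasing sequence whose supremum is the support projection $s$ of $|u|$, and $|u|^{r}s = |u|^{r}$; since $|u|^{r}$ commutes with $e_m$, the net $(|u|^{r}e_m)_{m}$ is an increasing net of positive operators with supremum $|u|^{r}$. Normality of $\T$ therefore yields $\T(|u|^{r}e_m) \uparrow \T(|u|^{r}) = \|u\|_r^{r}$, which is exactly the stated monotone convergence of $\|u_m\|_r$ to $\|u\|_r$. Convergence of $u_m$ to $u$ in $L_p(\M)\cap L_q(\M)$ is then immediate from $u - u_m = v|u|(1-e_m)$, which gives $|u - u_m|^{r} = |u|^{r}(1-e_m)$ and hence $\|u - u_m\|_r^{r} = \|u\|_r^{r} - \|u_m\|_r^{r} \to 0$ for both $r = p$ and $r = q$.

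There is no serious obstacle in this argument; it is a routine spectral-truncation computation. The only feature worth highlighting is that a single family of projections $(e_m)$ works simultaneously for the two exponents $p$ and $q$, because the construction depends only on $|u|$ and not on the choice of $r$.
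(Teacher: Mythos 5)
Your proof is correct and follows essentially the same route as the paper: the same spectral truncation $u_m=ue_m$ with $e_m=e^{|u|}([1/m,m])$, the same estimate $\T(e_m)\leq m^p\|u\|_p^p$ for membership in $L_1(\M)\cap L_2(\M)$, and the same identities $|u_m|^r=|u|^re_m$ driving the monotone convergence of the norms and the convergence of $u_m$ to $u$. Your added care in noting that $(e_m)$ increases to the support projection of $|u|$ (rather than to $\mathbf 1$) is a minor but correct refinement of the paper's phrasing.
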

\begin{proof} Let $e^{|u|}$ denote the spectral measure of $|u|$. For each $m\geq 1$, set $e_m:=e^{|u|}([1/m, m])$. Then $(e_m)_m$ is an increasing sequence of projections in $\M$ that converges to ${\bf 1}$ for the strong operator topology. 
Clearly, for every $m\geq 1$, $\T(e_m)\leq m^p \|u\|_p^p<\infty$. Set $u_m:= ue_m$.
It is easy to check that $u_m \in L_1(\M) \cap \M$. A fortiori, $u_m \in L_1(\M) \cap L_2(\M)$.  From the identities
\begin{align*}
\max\big\{\|u-u_m\|_p, &\|u-u_m\|_q\big\} =\max\big\{\|u({\bf 1}-e_m)\|_p, \|u({\bf 1}-e_m)\|_q\big\} \\
&=\max\big\{\|({\bf 1}-e_m)|u|^2({\bf 1}-e_m)\|_{p/2}^{1/2},\|({\bf 1}-e_m)|u|^2({\bf 1}-e_m)\|_{q/2}^{1/2}\big\},
\end{align*}
 we get that $\lim_{n\to \infty} \| u-u_m\|_{L_p(\M) \cap L_q(\M)} =0$. 
On the other hand, if  $s$ is equal to either $p$ or $q$, it follows from   the identity
$\|u_m\|_s=\||u| e_m |u| \|_{s/2}^{1/2}$   that $(\|u_m\|_s)_m$ forms an increasing sequence that converges to $\|u\|_s$.
\end{proof}

We refer to \cite{CS, DDP1, PX3, X} for extensive discussions on various properties of 
noncommutative symmetric spaces.


\subsection{Function spaces and interpolations}
In this subsection, we will discuss concrete description of certain classes of noncommutative symmetric spaces as interpolations of noncommutative $L_p$-spaces that are relevant for our method of   proof in the next section. We  begin by recalling that for a given compatible Banach couple  $(X_0, X_1)$, a Banach space $Z$ is called  an \emph{interpolation space} if $X_0 \cap X_1 \subseteq Z \subseteq X_0 +X_1$ and whenever  a bounded linear operator $T: X_0 + X_1 \to X_0 +X_1$ is such that $T(X_0) \subseteq X_0$ and $T(X_1) \subseteq X_1$ we have $T(Z)\subseteq Z$
and $\|T:Z\to Z\|\leq C\max\{\|T :X_0 \to X_0\| , \|T:X_1 \to X_1\|\}$ for some constant $C$. In this case,  we write $Z \in {\rm Int}(X_0, X_1)$. When $C=1$, $Z$ is called exact interpolation space.  We refer to \cite{BENSHA,BL,KaltonSMS} for more on interpolations.

 In this paper we rely heavily on   the  notions of $K$-functionals and
$J$-functionals which we now review:

For a compatible  Banach couple $(X_0, X_1)$, we define  the $J$-functional by setting for any $x \in X_0
\cap X_1$ and $t>0$, 
$$
J(x,t;X_0, X_1)=\max\big\{\|x\|_{X_0}, t\|x\|_{X_1}\big\}.
$$
As a dual notion, the $K$-functional is defined by setting for any $x\in E_0 +E_1$ and $t>0$,
\[
K(x,t;X_0, X_1)=\inf\big\{\|x_1\|_{X_0} + t\|x_2\|_{X_1}; x=x_1 +x_2\big\}.
\]
If the compatible couple $(X_0, X_1)$ is clear from the context, then
we will simply write $J(x,t)$ and $K(x, t)$  in place of  $J(x,t;X_0, X_1)$ and $K(x,t;X_0, X_1)$,  respectively. 
It is now quite well-known that  any   symmetric Banach function space  with the Fatou property that belongs to  ${\rm Int}(L_p, L_q)$ is given by a  $K$-method.  More precisely,  we have the following result due to  Brudnyi and Krugliak (see for instance \cite[Theorem~6.3]{KaltonSMS}).

\begin{theorem} Let $E$ be a  symmetric Banach function space on $(0,\infty)$ with the Fatou property. If  $E \in {\rm Int}(L_p(0,\infty), L_q(0,\infty))$ for $1\leq p<q\leq \infty$, then there exists a function space $F$ on $(0,\infty)$ such that $f \in E$ if and only if $K(f, \cdot, L_p,L_q) \in F$ and there exists a constant   $C$ such that
\[
C^{-1} \big\| K(f,\cdot)\big\|_F \leq \|f\|_E \leq C \big\| K(f,\cdot)\big\|_F.
\]
\end{theorem}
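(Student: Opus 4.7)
The plan is to construct $F$ explicitly as a ``$K$-orbit'' of $E$ and then reduce the nontrivial half of the equivalence $\|f\|_E \simeq \|K(f,\cdot)\|_F$ to the Brudnyi--Krugliak $K$-divisibility theorem.

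First I would define $F$ by setting
\[
\|g\|_F := \inf\bigl\{\|h\|_E : h \in E,\ g^*(t) \leq K(h, t; L_p, L_q)\ \text{for every}\ t>0\bigr\}
\]
(with the convention $\inf\emptyset = +\infty$) and taking $F$ to be the space of measurable $g$ on $(0,\infty)$ for which $\|g\|_F < \infty$. Using that every $K$-functional is nonnegative, nondecreasing and quasi-concave, together with the Fatou property of $E$, one verifies that $F$ is a symmetric Banach function space on $(0,\infty)$. The easy half of the theorem, $\|K(f,\cdot)\|_F \leq \|f\|_E$, then follows immediately by taking $h = f$ as candidate in the infimum.

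For the reverse direction, suppose $f \in L_p + L_q$ has $\|K(f,\cdot)\|_F < \infty$ and pick $h \in E$ with $K(f,t) \leq K(h,t)$ for all $t>0$ and $\|h\|_E \leq 2\|K(f,\cdot)\|_F$. The task is to conclude $\|f\|_E \leq C\|h\|_E$ from this pointwise domination of $K$-functionals. Here I would invoke the Brudnyi--Krugliak $K$-divisibility theorem: there is an absolute constant $\gamma$ such that whenever $K(f, t; L_p, L_q) \leq \sum_n \omega_n(t)$ with $\omega_n$ quasi-concave, one can decompose $f = \sum_n f_n$ in $L_p+L_q$ with $K(f_n, t; L_p, L_q) \leq \gamma\,\omega_n(t)$. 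Applied to a dyadic decomposition of $K(h,\cdot)$, this yields $f = \sum_n f_n$ where each $f_n$ can be realized as the image of a corresponding dyadic piece of $h$ under a linear operator bounded on both $L_p$ and $L_q$ with uniform constants. Since $E \in {\rm Int}(L_p, L_q)$, these operators extend to $E$ with uniformly controlled norms, and summing produces the desired bound.

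The main obstacle is the $K$-divisibility theorem itself, whose proof requires an intricate iterative construction of the decomposition together with uniform control of the concave majorants; this is the deep technical heart of the argument, and I would invoke it as a black box rather than re-prove it. A secondary but more routine check is that $\|\cdot\|_F$ actually defines a complete norm on a genuine function space, for which the quasi-concavity of $K$-functionals and the Fatou property of $E$ are the key inputs.
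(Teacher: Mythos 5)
Your overall architecture is the standard one (define $F$ as a $K$-orbit of $E$, get the trivial direction by taking $h=f$, and fight for the converse), and it is worth noting that the paper itself offers no proof of this statement -- it is imported from Brudnyi--Krugliak via \cite[Theorem~6.3]{KaltonSMS} -- so there is no in-paper argument to compare against. However, your reduction of the hard direction contains a genuine gap at its central step. $K$-divisibility gives you $f=\sum_n f_n$ with $K(f_n,t)\le\gamma\,\omega_n(t)$, and if $\omega_n(t)=\lambda_n\min(1,t2^{-n})$ this indeed yields $J(f_n,2^n;L_p,L_q)\le\gamma\lambda_n$. But the next assertion -- that each $f_n$ ``can be realized as the image of a corresponding dyadic piece of $h$ under a linear operator bounded on both $L_p$ and $L_q$ with uniform constants'' -- does not follow from $K$-divisibility, nor from the inequality $J(f_n,2^n)\le\gamma J(h_n,2^n)$. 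The natural rank-one construction $T_nx=\phi_n(x)f_n$ requires a functional $\phi_n$ with $\phi_n(h_n)=1$ and $\|\phi_n\|_{L_p^*}\lesssim 1/J(h_n,2^n)$, $\|\phi_n\|_{L_q^*}\lesssim 2^n/J(h_n,2^n)$ simultaneously, and by duality such a $\phi_n$ exists only when $K(h_n,2^n)\gtrsim J(h_n,2^n)$; this fails for ``degenerate'' elements (already in $(L_1,L_\infty)$ one can have $\|v\|_1=\|v\|_\infty=1$ but $K(v,1)$ arbitrarily small). So the pointwise domination of $K$-functionals does not, by general principles, produce operators through which the hypothesis $E\in{\rm Int}(L_p,L_q)$ can be applied.

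What you are implicitly using is that $(L_p,L_q)$ is a Calder\'on(--Mityagin) couple: pointwise $K$-domination implies orbit membership with controlled operator norms. For $(L_1,L_\infty)$ this is the classical Calder\'on--Mityagin theorem; for general $1\le p<q\le\infty$ it is Sparr's theorem on weighted $L_p$-couples, a deep result that is logically independent of $K$-divisibility ($K$-divisibility holds for every Banach couple, whereas most couples are not Calder\'on couples, so the former cannot imply the latter). The correct skeleton is therefore: (a) Sparr's theorem shows every $E\in{\rm Int}(L_p,L_q)$ is relatively $K$-monotone over the couple, and (b) the Brudnyi--Krugliak machinery (where $K$-divisibility genuinely enters) converts $K$-monotonicity into a $K$-method representation $E=(L_p,L_q)_{F;K}$ and handles the verification that your orbit functional $\|\cdot\|_F$ is equivalent to a bona fide function-space norm. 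As written, your proof omits ingredient (a), which is the heart of the matter for this particular couple.
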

We will use the corresponding $J$-method of the above theorem. This was  studied in \cite{Bennett1,Bennett2}. We  review the basic construction of this method and introduce a discrete version that is quite essential in the next section.

Suppose that an element $x \in X_0 +X_1$  admits a representation 
\begin{equation}\label{rep}
x=\int_0^\infty  u(t)\ dt/t, 
\end{equation}
where $u(\cdot)$ is measurable function that takes its  values in $X_0 \cap X_1$ and the  integral is convergent in $X_0 + X_1$. For any given representation $u(\cdot)$, we set for $s>0$,
\begin{equation}
j(u,s)=\int_s^\infty t^{-1} J(u(t), t)\ dt/t.
\end{equation}
Given a symmetric Banach function space $F$ defined on $(0,\infty)$, the interpolation space $(X_0, X_1)_{F, j}$  consists of elements  $x \in X_0 +X_1$ which admit a representation  as in \eqref{rep} 
 and are such that
\begin{equation}
\big\| x \big\|_{F,j} =\inf\Big\{ \big\| j(u,\cdot)\big\|_F\Big\} <\infty,
\end{equation}
where the infimum is taken over all representation $u$ of $x$ as in \eqref{rep}. 
We refer to \cite{Bennett1, Bennett2} for  a comprehensive study of this interpolation method along with some other equivalent methods.
As noted above,  we will need a discrete version  of this  method.  This is standard but we could not find any reference in the literature for this particular method so we provide the details.

 We define  the interpolation space $(X_0, X_1)_{F, \underline{j}}$ to be the space of elements $x\in X_0 +X_1$ which admit a representation
\begin{equation}\label{series}
x=\sum_{\nu \in \mathbb{Z} }u_\nu \quad \text{(convergence in $X_0 +X_1$)}
\end{equation}
with $u_\nu \in X_0 \cap X_1$ and are such that
\[
\big\|x\big\|_{F, \underline{j}}=\inf\Big\{ \Big\| \underline{j}(\{u_\nu\}_\nu, \cdot)\Big\|_F\Big\} <\infty,
\]
where  the decreasing  function $\underline{j}(\{u_\nu\}_\nu, \cdot)$ is defined by
\[
\underline{j}(\{u_\nu\}_\nu, t) =\sum_{\gamma\geq \nu +1} 2^{-\gamma} J(u_\gamma, 2^{\gamma})  \quad \text{for}\ t\in [2^\nu, 2^{\nu+1}),
\]
and
the infimum is taken over all representations of $x$ as in \eqref{series}.  Clearly, the function $\underline{j}(\{u_\nu\}_\nu, t)$ takes only countably many values. Thus, we may call  $(X_0, X_1)_{F,\underline{j}}$   as a discrete interpolation method.
As in the case of real interpolation methods, this discrete version is equivalent to the continuous version described earlier.  More precisely,  we have:
\begin{lemma}\label{discrete} Let $x \in X_0 +X_1$. Then 
 $x \in (X_0, X_1)_{F, j}$ if and only if $x \in (X_0, X_1)_{F, \underline{j}}$. More precisely,  the following inequalities hold: 
\[
\frac{1}{4}\big\|x\big\|_{F,j} \leq \big\|x\big\|_{F,\underline{j}} \leq 4 \big\|x\big\|_{F,j}.
\]
\end{lemma}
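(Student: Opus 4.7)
The plan is to prove the two inequalities separately by passing back and forth between continuous representations $x=\int_0^\infty u(t)\,dt/t$ and discrete representations $x=\sum_{\nu\in\mathbb Z}u_\nu$ via a dyadic decomposition of $(0,\infty)$.

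For the upper estimate $\|x\|_{F,\underline{j}}\le 4\|x\|_{F,j}$, I would start with an arbitrary representation $x=\int_0^\infty u(t)\,dt/t$ with $\|j(u,\cdot)\|_F<\infty$ and set $u_\nu:=\int_{2^\nu}^{2^{\nu+1}} u(t)\,dt/t$. Finiteness of $j(u,2^\nu)$ together with $\|u(t)\|_{X_0}\le J(u(t),t)$ and $\|u(t)\|_{X_1}\le J(u(t),t)/t$ gives $u_\nu\in X_0\cap X_1$ and $\sum_\nu u_\nu=x$ in $X_0+X_1$. By the triangle inequality in $X_0$ and $X_1$ followed by the observation that $2^\nu\le t$ on $[2^\nu,2^{\nu+1})$,
\[
J(u_\nu,2^\nu)\;\le\;\int_{2^\nu}^{2^{\nu+1}} J(u(t),t)\,dt/t,
\]
and since $2^{-\nu}\le 2t^{-1}$ on $[2^\nu,2^{\nu+1})$ we get $2^{-\nu}J(u_\nu,2^\nu)\le 2\int_{2^\nu}^{2^{\nu+1}} t^{-1}J(u(t),t)\,dt/t$. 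Summing for $\gamma\ge\nu+1$ and using monotonicity of $j(u,\cdot)$ yields, for every $s\in[2^\nu,2^{\nu+1})$, the pointwise estimate $\underline{j}(\{u_\mu\}_\mu,s)\le 2\,j(u,2^{\nu+1})\le 2\,j(u,s)$. Taking $F$-norms and infimizing over $u$ gives $\|x\|_{F,\underline{j}}\le 2\|x\|_{F,j}$.

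For the lower estimate $\|x\|_{F,j}\le 4\|x\|_{F,\underline{j}}$, I would start with a discrete representation $x=\sum_\nu u_\nu$ and set $u(t):=u_\nu/\log 2$ for $t\in[2^\nu,2^{\nu+1})$; then $\int_0^\infty u(t)\,dt/t=x$ in $X_0+X_1$. On each dyadic block $[2^\gamma,2^{\gamma+1})$ we have $J(u_\gamma,t)\le 2J(u_\gamma,2^\gamma)$ because the first entry of the maximum defining $J$ is unchanged and the second at most doubles. Together with $\int_{2^\gamma}^{2^{\gamma+1}}t^{-2}\,dt=2^{-\gamma-1}$ this gives $\int_{2^\gamma}^{2^{\gamma+1}} t^{-1}J(u(t),t)\,dt/t\le (\log 2)^{-1}\cdot 2^{-\gamma}J(u_\gamma,2^\gamma)$. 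For $s\in[2^\nu,2^{\nu+1})$, summing from $\gamma=\nu$ and using monotonicity of $j(u,\cdot)$,
\[
j(u,s)\;\le\;j(u,2^\nu)\;\le\;(\log 2)^{-1}\sum_{\gamma\ge\nu}2^{-\gamma}J(u_\gamma,2^\gamma)\;=\;(\log 2)^{-1}\,\underline{j}(\{u_\mu\}_\mu,s/2),
\]
since $s/2\in[2^{\nu-1},2^\nu)$. Thus $j(u,\cdot)\le(\log 2)^{-1}D_2\underline{j}(\{u_\mu\}_\mu,\cdot)$ pointwise. Invoking the classical fact that the dilation operator on any symmetric Banach function space on $(0,\infty)$ satisfies $\|D_s\|_{F\to F}\le\max(1,s)$, we obtain $\|j(u,\cdot)\|_F\le (2/\log 2)\|\underline{j}(\{u_\mu\}_\mu,\cdot)\|_F$; infimizing and using $2/\log 2<4$ gives $\|x\|_{F,j}\le 4\|x\|_{F,\underline{j}}$.

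The main technical point is the use of the bound $\|D_2\|_{F\to F}\le 2$ in the second half; everything else reduces to the two elementary dyadic estimates $J(u,t)\le 2J(u,2^\gamma)$ on $[2^\gamma,2^{\gamma+1}]$ and the trivial integral $\int_{2^\gamma}^{2^{\gamma+1}} dt/t=\log 2$. There is also a minor routine check that the constructed $u_\nu$ lie in $X_0\cap X_1$ and that the discrete (resp.\ continuous) sum/integral converges to $x$ in $X_0+X_1$, but these are immediate consequences of $\|u\|_{X_0}\le J(u,t)$ and $t\|u\|_{X_1}\le J(u,t)$ combined with the finiteness of the relevant functionals.
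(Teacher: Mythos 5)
Your proof is correct, and it is exactly the standard dyadic discretization argument (integrate the continuous representation over dyadic blocks in one direction; spread each $u_\nu$ uniformly over $[2^\nu,2^{\nu+1})$ divided by $\log 2$ in the other, using $\|D_2\|_{F\to F}\le 2$) that the paper itself invokes by deferring to \cite{BL} rather than writing out. Your constants $2$ and $2/\log 2$ even improve on the stated bound of $4$.
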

The verification of Lemma~\ref{discrete} is a simple adaptation of standard arguments from interpolation theory which we leave for the reader (see  \cite{BL}).

Combining  \cite[Theorem~9.3]{Bennett1},  \cite[Theorem~3.5]{Bennett2},  and Lemma~\ref{discrete}, we may state the following result which is one of the decisive tools we use in our proof.
\begin{theorem} If $E$ is a  symmetric Banach  function space on $(0,\infty)$  with the Fatou property then the following are equivalent:
\begin{itemize}
\item[(i)]  $1<p<p_E \leq q_E <q<\infty$.
\item [(ii)] There exists a symmetric Banach  function space $F$ on $(0,\infty)$  with nontrivial Boyd indices  such that:
\[
E=(L_p(0,\infty), L_q(0,\infty))_{F,\underline{j}}\quad \text{(with equivalent norms)}.
\]
\end{itemize}
\end{theorem}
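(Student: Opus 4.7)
The plan is to assemble the equivalence directly from the three ingredients listed just before the statement, handling the two implications separately but through a common thread. The unifying point is that $(L_p,L_q)_{F,\underline{j}}$ and the continuous $J$-method space $(L_p,L_q)_{F,j}$ are the same (with equivalent norms) by Lemma~\ref{discrete}, so the question reduces to characterizing those $E$ that arise from the continuous $J$-method with a parameter $F$ having nontrivial Boyd indices.

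For the forward direction (i)$\Rightarrow$(ii), I would start from the fact that the hypothesis $1<p<p_E\le q_E<q<\infty$ forces $E\in{\rm Int}(L_p(0,\infty),L_q(0,\infty))$ by Boyd's classical interpolation criterion (which is the reason for the inclusion $L_p\cap L_q\subseteq E\subseteq L_p+L_q$ used earlier). Invoking the Brudnyi--Krugliak theorem cited just above, one obtains a symmetric Banach function space $F_0$ on $(0,\infty)$ such that $E=(L_p,L_q)_{F_0,K}$. The next step is to convert this $K$-representation into a $J$-representation; this is exactly the content of Bennett's $K$/$J$ duality (\cite[Theorem~9.3]{Bennett1}), which yields a (possibly different) symmetric function space $F$ with $E=(L_p,L_q)_{F,j}$. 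To conclude the forward implication, I would read \cite[Theorem~3.5]{Bennett2} as translating the strict inequalities $p<p_E\le q_E<q$ into nontriviality of the Boyd indices of $F$. Finally, Lemma~\ref{discrete} replaces the continuous $j$-method by the discrete $\underline{j}$-method without changing the parameter or disturbing the norm equivalence.

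The converse (ii)$\Rightarrow$(i) is quicker. Given $E=(L_p,L_q)_{F,\underline{j}}$ with $F$ of nontrivial Boyd indices, Lemma~\ref{discrete} first rewrites this as $E=(L_p,L_q)_{F,j}$. Then \cite[Theorem~3.5]{Bennett2}, read in the opposite direction, transports the nontrivial Boyd indices of $F$ into the strict inequalities $p<p_E\le q_E<q$ for $E$. The Fatou property of $E$ is inherited from $F$ (or may be checked directly from the $j$-representation), and the intermediate-space inclusions hold by construction of the $J$-method.

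The main obstacle I anticipate is a careful bookkeeping step, not a conceptual one: one must verify that the Boyd indices behave consistently as one passes through the two translations $K\leftrightarrow J$ and continuous $\leftrightarrow$ discrete, so that the parameter space $F$ produced at the end of the chain genuinely has nontrivial Boyd indices. This matching is precisely what \cite[Theorem~3.5]{Bennett2} delivers, so the real work lies in verifying that the hypotheses of that theorem (in particular, the Fatou-type regularity of parameter spaces and the fact that $F_0$ and $F$ are both symmetric and share the relevant dilation behavior) are supplied by the assumptions we have on $E$.
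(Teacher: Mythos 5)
Your proposal is correct and follows essentially the same route as the paper, which proves this theorem precisely by combining the Brudnyi--Krugliak $K$-representation, Bennett's $K$/$J$ translation (\cite[Theorem~9.3]{Bennett1}), the Boyd-index correspondence of \cite[Theorem~3.5]{Bennett2}, and Lemma~\ref{discrete} for passing to the discrete method. The only superfluous remark is your comment that the Fatou property of $E$ is ``inherited'' in the converse direction: it is a standing hypothesis of the theorem, so nothing needs to be checked there.
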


As is now well-known, the preceding  interpolation result automatically lifts to the noncommutative setting (see  \cite[Corollary~2.2]{PX3}):
\begin{corollary}\label{lifted-interpolation}
Let   $E$  be a  symmetric Banach  function space on $(0,\infty)$ with the Fatou property.  Then the following are equivalent:
\begin{itemize}
\item[(i)]  $1<p<p_E \leq q_E <q<\infty$.
\item [(ii)] There exists a symmetric Banach  function space $F$ on $(0,\infty)$  with nontrivial Boyd indices  such that for every semifinite  von Neumann algebra $(\N,\sigma)$,
\[
E(\N,\sigma)=(L_p(\N,\sigma), L_q(\N,\sigma))_{F,\underline{j}},
\]
with  equivalent norms depending only on $E$, $p$, and $q$.
\end{itemize}
\end{corollary}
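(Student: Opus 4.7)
The corollary is really a direct combination of the preceding theorem (the commutative Brudnyi--Krugliak/Bennett characterization) with the general principle, going back to Pisier--Xu, that a real interpolation functor applied at the commutative level lifts automatically to noncommutative $L_p$-couples. I would organize the proof around these two ingredients.

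For the implication (i)$\Rightarrow$(ii), I would first invoke the preceding theorem (stated for function spaces on $(0,\infty)$) to obtain a symmetric Banach function space $F$ on $(0,\infty)$ with nontrivial Boyd indices and a constant $C=C(E,p,q)$ such that
\[
C^{-1}\|f\|_{(L_p(0,\infty),L_q(0,\infty))_{F,\underline{j}}} \leq \|f\|_E \leq C\|f\|_{(L_p(0,\infty),L_q(0,\infty))_{F,\underline{j}}}.
\]
The key observation is that the discrete $\underline{j}$-method is, up to the universal constant $4$ supplied by Lemma~\ref{discrete}, the same as Bennett's continuous $j$-method, which is a genuine real interpolation functor in the sense of \cite{BL}. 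Consequently, \cite[Corollary~2.2]{PX3} applies: the same parameter space $F$ produces the noncommutative interpolation identity $E(\N,\sigma)=(L_p(\N,\sigma),L_q(\N,\sigma))_{F,\underline{j}}$ for every semifinite von Neumann algebra $(\N,\sigma)$, with equivalent norms whose constants are controlled solely by $C$ and the absolute constants in Lemma~\ref{discrete}, and hence depend only on $E$, $p$, and $q$.

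For (ii)$\Rightarrow$(i), I would simply specialize the hypothesis to $\N=L_\infty(0,\infty)$ equipped with integration against Lebesgue measure. In that case $L_r(\N,\sigma)=L_r(0,\infty)$ for every $r$ and $E(\N,\sigma)=E$, so (ii) reduces to the commutative identity $E=(L_p(0,\infty),L_q(0,\infty))_{F,\underline{j}}$ with $F$ having nontrivial Boyd indices. The preceding theorem then forces $1<p<p_E\leq q_E<q<\infty$.

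The only non-routine point is the verification that the specific discrete $\underline{j}$-method used here falls within the scope of the Pisier--Xu lifting principle. Lemma~\ref{discrete} handles this by reducing it to the classical continuous $j$-method, which is well known to be a real interpolation method, and the lifting then proceeds verbatim as in \cite[Corollary~2.2]{PX3}. Everything beyond this is tracking of constants.
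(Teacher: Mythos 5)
Your proposal is correct and follows essentially the same route as the paper, which simply combines the preceding commutative theorem with the Pisier--Xu lifting principle of \cite[Corollary~2.2]{PX3}; your added remark that the converse follows by specializing $(\N,\sigma)$ to $L_\infty(0,\infty)$ with Lebesgue integration is the natural (and intended) way to close the equivalence.
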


We record   a general fact about interpolations of linear operators between two noncommutative spaces for further use.
\begin{proposition}[\cite{DDP4}]\label{Operator-interpolation} Assume that $E \in {\rm Int}(L_p, L_q)$ and  $\M$ and $\N$ are semifinite von Neumann algebras. Let $T: L_p(\M) + L_q(\M) \to L_p(\N) +L_q(\N)$ be a linear operator such that $T: L_p(\M) \to L_p(\N)$ and $T: L_q(\M) \to L_q(\N)$ are bounded. Then $T$ maps $E(\M)$ into $E(\N)$ and the resulting operator $T: E(\M) \to E(\N)$ is bounded.  Moreover, we have the following estimate:
\[
\big\| T:E(\M) \to E(\N)\big\| \leq C \max\big\{ \big\| T: L_p(\M) \to L_p(\N)\big\|, \big\| T: L_q(\M) \to L_q(\N)\big\| \big\}
\]
for some absolute constant $C$.
\end{proposition}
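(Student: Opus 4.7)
The plan is to prove this via the standard three-step strategy from the theory of noncommutative symmetric spaces: pass the hypothesized endpoint boundedness through a $K$-functional majorization, identify the noncommutative $K$-functional with the commutative $K$-functional applied to singular-value functions, and then invoke $K$-monotonicity of the commutative couple $(L_p, L_q)$.

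Setting $M := \max\{\|T : L_p(\M) \to L_p(\N)\|,\ \|T : L_q(\M) \to L_q(\N)\|\}$, the first step is immediate. For any admissible decomposition $x = x_0 + x_1$ with $x_0 \in L_p(\M)$ and $x_1 \in L_q(\M)$, the linearity of $T$ gives $Tx = Tx_0 + Tx_1$, hence
\[
\|Tx_0\|_{L_p(\N)} + t\,\|Tx_1\|_{L_q(\N)} \leq M\big(\|x_0\|_{L_p(\M)} + t\,\|x_1\|_{L_q(\M)}\big).
\]
Taking the infimum over all such decompositions yields
\[
K(Tx, t; L_p(\N), L_q(\N)) \leq M\cdot K(x, t; L_p(\M), L_q(\M)), \qquad t>0.
\]
Second, I would invoke the well-known identification of noncommutative $K$-functionals: for any $y \in L_p(\M) + L_q(\M)$ and $t > 0$,
\[
K(y, t; L_p(\M), L_q(\M)) \simeq K(\mu(y), t; L_p(0,\infty), L_q(0,\infty))
\]
with universal constants. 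This is established by combining the Fack--Kosaki spectral cutoffs $y = y\,e^{|y|}[0,\mu_s(y)) + y\,e^{|y|}[\mu_s(y), \infty)$ with Holmstedt's formula, realizing the optimal Holmstedt decomposition at the operator level. Combining this equivalence with Step 1 produces an absolute constant $C$ such that, for all $t>0$,
\[
K(\mu(Tx), t; L_p(0,\infty), L_q(0,\infty)) \leq CM\cdot K(\mu(x), t; L_p(0,\infty), L_q(0,\infty)).
\]
Finally, the classical Calder\'on--Mitjagin theorem asserts that $(L_p(0,\infty), L_q(0,\infty))$ is a Calder\'on couple, so every $E \in {\rm Int}(L_p, L_q)$ is $K$-monotone with some constant $c_E$. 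The majorization above therefore forces $\mu(Tx) \in E$ with $\|\mu(Tx)\|_E \leq c_E CM\,\|\mu(x)\|_E$, which translates directly into $\|Tx\|_{E(\N)} \leq (c_E C)\,M\,\|x\|_{E(\M)}$, as required.

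The only delicate point in this program is Step 2, the equivalence of the noncommutative $K$-functional with the commutative $K$-functional of the singular-value function. It rests on the Fack--Kosaki subadditivity $\mu_{s+t}(y_0 + y_1) \leq \mu_s(y_0) + \mu_t(y_1)$ and on verifying that the spectral cutoff actually realizes the infimum in the $K$-functional up to a universal factor; Step 1 and Step 3 are essentially definitional and classical. Once this identification of $K$-functionals is in hand, the remainder of the argument is a transparent application of commutative interpolation theory to the singular-value function.
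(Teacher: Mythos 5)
Your argument is correct and is essentially the standard proof behind the cited result: the paper itself gives no proof of this proposition (it is quoted from [DDP4]), and the three steps you describe --- the $K$-functional majorization, the identification $K(y,t;L_p(\M),L_q(\M))\simeq K(\mu(y),t;L_p,L_q)$ via the exact formula for the $(L_1,L_\infty)$ couple plus Holmstedt, and $K$-monotonicity of interpolation spaces for $(L_p,L_q)$ --- are exactly the ingredients used in that literature. Two small caveats: for $1<p<q<\infty$ the Calder\'on-couple property of $(L_p(0,\infty),L_q(0,\infty))$ is Sparr's theorem rather than the Calder\'on--Mitjagin theorem (which covers $(L_1,L_\infty)$), and the constant your argument produces depends on $E$ through its interpolation/$K$-monotonicity constant rather than being absolute --- which is all the paper actually uses, since its estimates are stated as $\simeq_E$.
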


\subsection{Noncommutative martingales}

Let us now recall the general setup for noncommutative martingales.
In the sequel, we always assume that the von Neumann algebra $\M$ is such that $\M_*$ is separable.  Denote by $(\M_n)_{n \geq 1}$ an
increasing sequence of von Neumann subalgebras of ${\M}$
whose union  is weak*-dense in
$\M$. For $n\geq 1$, we assume that there exists a trace preserving conditional expectation ${\E}_n$ 
from ${\M}$ onto  ${\M}_n$.  It is well-known that if  $\T_n$  denotes the restriction of $\T$ on $\M_n$, then $\E_n$ extends to a contractive projection from $L_p(\M,\T)$ onto $L_p(\M_n, \T_n)$ for all $1\leq p \leq \infty$. More generally, if $E$ is a symmetric Banach function space on $(0,\infty)$  which is an  interpolation space of  the couple $(L_1(0,\infty), L_\infty(0,\infty))$ then $\E_n$  is bounded  from $E(\M,\T)$ onto $E(\M_n,\T_n)$.  

\begin{definition}
A sequence $x = (x_n)_{n\geq 1}$ in $L_1(\M)$ is called \emph{a
noncommutative martingale} with respect to $({\M}_n)_{n \geq
1}$ if $\mathcal{E}_n (x_{n+1}) = x_n$ for every $n \geq 1.$
\end{definition}
If in addition, all $x_n$'s belong to $E(\M)$  then  $x$ is called an $E(\M)$-martingale.
In this case we set
\begin{equation*}\| x \|_{E(\M)} = \sup_{n \geq 1} \|
x_n \|_{E(\M)}.
\end{equation*}
If $\| x \|_{E(\M)} < \infty$, then $x$ is called
a bounded $E(\M)$-martingale. 

Let $x = (x_n)_{n\ge1}$ be a noncommutative martingale with respect to
$(\M_n)_{n \geq 1}$.  Define $dx_n = x_n - x_{n-1}$ for $n
\geq 1$ with the usual convention that $x_0 =0$. The sequence $dx =
(dx_n)_{n\ge1}$ is called the \emph{ martingale difference sequence} of $x$. A martingale
$x$ is called  \emph{a finite martingale} if there exists $N$ such
that $d x_n = 0$ for all $n \geq N.$
In the sequel, for any operator $x\in E(\M)$,  we denote $x_n=\E_n(x)$ for $n\geq 1$.
It is well-known that if $1<p<\infty$ and $x=(x_n)_{n\geq 1}$ is a bounded $L_p(\M)$-martingale then there exists $x_\infty \in L_p(\M)$ such that $x_n=\E_n(x_\infty)$ (for all $n\geq 1$) and $\|x\|_{p}=\|x_\infty\|_p$.  For the   general context of symmetric spaces,  if $E\in {\rm Int}(L_p,L_q)$ for $1<p\leq q <\infty$ and  satisfies the Fatou property, then any  bounded $E(\M)$-martingale $x=(x_n)_{n\geq 1}$ is of the form $(\E_n(x_\infty))_{n\geq 1}$ where  $x_\infty \in E(\M)$ satisfying $\|x\|_{E(\M)} \simeq_E \|x_\infty\|_{E(\M)}$, with equality if $E$ is an exact interpolation space. Indeed,  by reflexivity, such statement can be readily verified for $L_p(\M) + L_q(\M)$ and  for general  such $E$, it  follows from the fact that $E(\M)$ semi-embeds into $L_p(\M) +L_q(\M)$. Because of these facts, we often identify martingales with  measurable operators when appropriate.

Let us now  review the definitions of  Hardy spaces  and conditioned Hardy spaces of noncommutative martingales.  Throughout, 
 $E$ is a  symmetric function space on $(0,\infty)$ with the additional properties that it satisfies the Fatou property and  $E\in {\rm Int}(L_1, L_\infty)$. 
 
  We define 
the column space $E(\M;\ell_2^c)$ to be the linear space  of all sequences $(a_n)_{n\geq 1} \subset E(\M)$ such that the  infinite column matrix $\sum_{n\geq 1} a_n \otimes e_{n,1} \in E(\M  \overline{\otimes} B(\ell_2(\mathbb{N})))$ where $B(\ell_2(\mathbb{N}))$ is the algebra of all bounded operators  on the Hilbert space $\ell_2(\mathbb{N})$ equipped with its usual trace $\tr$ and $(e_{n,m})_{n,m\geq 1}$ denotes the collection of all unit matrices in $B(\ell_2(\mathbb{N}))$.  It is not difficult to verify that the linear subspace $\{\sum_{n\geq 1} a_n \otimes e_{n,1}: (a_n)_n \in E(\M;\ell_2^c)\}$ is closed in $E(\M \overline{\otimes} B(\ell_2))$and therefore
we equip $E(\M;\ell_2^c)$ with the norm
\[
\Big\| (a_n)_{n\geq 1}\Big\|_{E(\M; \ell_2^c)} :=\Big\| \sum_{n\geq 1} a_n \otimes e_{n,1}\Big\|_{E(\M \overline{\otimes} B(\ell_2))}=\Big\| \big(\sum_{n\geq 1} |a_n|^2 \big)^{1/2} \Big\|_{E(\M)},
\]
then it  becomes  a Banach space. We can also define the corresponding row space $E(\M;\ell_2^r)$ to be the space of all sequences $(a_n)_{n\geq 1} \subset E(\M)$ for which $(a_n^*)_{n\geq 1} \in E(\M; \ell_2^c)$ equipped with the norm 
\[
\Big\| (a_n)_{n\geq 1}\Big\|_{E(\M; \ell_2^r)} =\Big\| (a_n^*)_{n\geq 1}\Big\|_{E(\M; \ell_2^c)}.
\]

Following \cite{PX}, we consider  the column and row versions of square functions
relative to a  martingale $x = (x_n)_{n\ge1}$ as follows:
 \[
 S_c (x) = \Big ( \sum_{k \geq1} |dx_k |^2 \Big )^{1/2}
 \
\text{and}
 \ \
 S_r (x) = \Big ( \sum_{k \geq 1} | dx^*_k |^2 \Big)^{1/2},
 \]
where convergences can be taken with respect to the measure topology.
Define $\mathcal{H}_E^c (\mathcal{M})$
(respectively,  $\mathcal{H}_E^r (\mathcal{M})$) to  be the set of all martingales $x=(x_n)_{n\geq 1}$ in $E(\M)$ for which the martingale difference sequence $(dx_n)_{n\geq 1} \in E(\M; \ell_2^c)$ (respectively,  $E(\M;\ell_2^r)$)
 equipped with  the norm $\| x \|_{\mathcal{H}_E^c}=\| (dx_n)_{n\geq 1}\|_{E(\M;\ell_2^c)}$
(respectively,  $\| x \|_{\mathcal{H}_E^r}=\| (dx_n)_{n\geq 1} \|_{E(\M; \ell_2^r)} $).
It is easy to verify that  since conditional expectations are bounded in $E(\M)$,  the normed spaces $\mathcal{H}_E^c (\mathcal{M})$  and $\mathcal{H}_E^r (\mathcal{M})$ embed  isometrically into $E(\M \overline{\otimes} B(\ell_2(\mathbb{N})))$ with closed ranges and therefore they are  Banach spaces. Moreover, one can see  from  its  definition that $\mathcal{H}_E^c (\mathcal{M})$ is simply the space of all martingales $x=(x_n)_{n\geq 1}$ in $E(\M)$  for which $\| x \|_{\mathcal{H}_E^c}= \| S_c(x)\|_{E(\M)} <\infty$. Similar statement is also valid for $\mathcal{H}_E^r(\M)$. 

We now turn to the mixture Hardy spaces of noncommutative martingales.  From the above discussions,  the spaces  $\mathcal{H}_E^c (\mathcal{M})$ and $\mathcal{H}_E^r (\mathcal{M})$  are compatible in the sense that they  continuously embed into the larger space $E(\M \overline{\otimes} B(\ell_2(\mathbb{N})))$. The Hardy space $\mathcal{H}_E(\M)$ is defined as
follows. For $1\leq p_E \leq q_E<2$, 
\begin{equation*}
\mathcal{H}_E(\mathcal{M})
= \mathcal{H}_E^c (\mathcal{M}) + \mathcal{H}_E^r(\mathcal{M})
\end{equation*}
equipped with the  norm
\begin{equation*}
\| x \|_{\mathcal{H}_E} =
\inf \big \{ \| y\|_{\mathcal{H}_E^c} + \| z \|_{\mathcal{H}_E^r} \big\},
\end{equation*}
where the infimum is taken over all
$y \in\mathcal{H}_E^c (\mathcal{M} )$ and $z \in \mathcal{H}_E^r(\mathcal{M} )$
such that $x = y + z$.
For $2 \leq p_E\leq q_E  <\infty$,
\begin{equation*}
\mathcal{H}_E (\mathcal{M}) =
\mathcal{H}_E^c (\mathcal{M}) \cap \mathcal{H}_E^r(\mathcal{M})
\end{equation*}
equipped with the norm
\begin{equation*}
\| x \|_{\mathcal{H}_E} =
\max \big \{ \| x\|_{\mathcal{H}_E^c} ,\; \| x \|_{\mathcal{H}_E^r} \big\}.
\end{equation*}
These definitions mirror the well-documented difference between the two cases  $1\leq p < 2$ and $2 \leq p  < \infty$  for  the special case where $E=L_p(0,\infty)$. We refer to \cite{Dirk-Pag-Pot-Suk} and \cite{Jiao2} for more information and results related to space $\mathcal{H}_E(\M)$.

We now consider  the conditioned versions of the above definitions. Our approach is based on  the conditioned spaces introduced by Junge in \cite{Ju}.  Since this is very crucial in the sequel, we review the basic setup. Below, we  use the convention that $\E_0=\E_1$.

Let $\E:\M \to \N$ be a normal faithful conditional expectation, where $\N$ is a von Neumann subalgebra of $\M$.  For $0<p\leq \infty$, we define  the conditioned space    $L_p^c(\M,\E)$ to be the completion of  $\M \cap L_p(\M)$ with respect to the quasi-norm
\[
\big\|x\big\|_{L_p^c(\M,\E)} =\big\|\E(x^*x)\big\|_{p/2}^{1/2}.
\]
It was shown in \cite{Ju} that for every $n$ and $0<p \leq \infty$,  there exists an isometric right $\M_n$-module map $u_{n,p}: L_p^c(\M, \E_n) \to L_p(\M_n;\ell_2^c)$ such that
\begin{equation}\label{u}
u_{n,p}(x)^* u_{n,q}(y)=\E_n(x^*y) \otimes e_{1,1},
\end{equation}
for all $x\in L_p^c(\M;\E_n)$ and $y \in L_q^c(\M;\E_n)$ with $1/p +1/q \leq 1$.
We now consider  the increasing sequence of expectations $(\E_n)_{n\geq 1}$.  Denote by $\mathcal{F}$ the collection of all finite sequences $(a_n)_{n\geq 1}$ in $L_1(\M) \cap \M$.
 For $0< p\leq \infty$,  define  the space $L_p^{\rm cond}(\M; \ell_2^c)$ to be   the completion  of $\mathcal{F}$  with respect to the norm:
\begin{equation}\label{conditioned-norm}
\big\| (a_n) \big\|_{L_p^{\rm cond}(\M; \ell_2^c)} = \big\| \big(\sum_{n\geq 1} \E_{n-1}|a_n|^2\big)^{1/2}\big\|_p.
\end{equation}
The space $L_p^{\rm cond}(\M;\ell_2^c)$ can be isometrically embedded into an $L_p$-space associated to  a semifinite von  Neumann algebra  by means of the following map: 
\[
U_p : L_p^{\rm cond}(\M; \ell_2^c) \to L_p(\M \overline{\otimes} B(\ell_2(\mathbb{N}^2)))\]
defined by setting 
\[
U_p((a_n)_{n\geq 1}) = \sum_{n\geq 1} u_{n-1,p}(a_n) \otimes e_{n,1}
\]
where as above, $(e_{i,j})_{i,j \geq 1}$ is the family of unit matrices in $B(\ell_2(\mathbb{N}))$. From \eqref{u}, it follows  
   that if $(a_n)_{n\geq 1} \in L_p^{\rm cond}(\M;  \ell_2^c)$ and $(b_n)_{n\geq 1} \in L_q^{\rm cond}(\M;  \ell_2^c)$  for $1/p +1/q \leq 1$ then
\begin{equation}\label{key-identity}
U_p( (a_n))^* U_q((b_n)) =\big(\sum_{n\geq 1} \E_{n-1}(a_n^* b_n)\big) \otimes e_{1,1} \otimes e_{1,1}.
\end{equation}
In particular,  $\| (a_n) \|_{L_p^{\rm cond}(\M; \ell_2^c)}=\|U_p( (a_n))\|_p$ and 
hence  $U_p$ is indeed an isometry.  We note that $U_p$ is independent of $p$ in the sense of interpolation. Below, we will simply write $U$ for $U_p$.
We refer the reader to \cite{Ju} and \cite{Junge-Perrin} for more details on the preceding construction.

 Now, we  generalize  the notion of conditioned spaces to the setting of symmetric spaces. We consider  the  algebraic linear  map $U$ restricted to   the linear space $\mathcal{F}$  that takes  its values in $L_1(\M \overline{\otimes} B(\ell_2(\mathbb{N}^2))) \cap \M \overline{\otimes} B(\ell_2(\mathbb{N}^2))$.  For a given 
   sequence $(a_n)_{n\geq 1} \in \mathcal{F}$, we set:
\[
\big\| (a_n) \big\|_{E^{\rm cond}(\M; \ell_2^c)} = \big\| \big(\sum_{n\geq 1} \E_{n-1}|a_n|^2\big)^{1/2}\big\|_{E(\M)}=\big\| U( (a_n))\big\|_{E(\M \overline{\otimes} B(\ell_2(\mathbb{N}^2)))}.
\]
This is well-defined and  induces   a norm  on the linear space $\mathcal{F}$.  We define   the Banach space $E^{\rm cond}(\M;  \ell_2^c)$ to be the completion of $\mathcal{F}$  with respect to the above  norm. Then  
 $U$ extends to an isometry from  $E^{\rm cond}(\M;\ell_2^c)$ into $E(\M \overline{\otimes} B(\ell_2(\mathbb{N}^2)))$ which we will still denote by $U$.

Similarly, we may define the corresponding  row version $E^{\rm cond}(\M; \ell_2^r)$ which can also be viewed as a subspace of $E(\M \overline{\otimes} B(\ell_2(\mathbb{N}^2)))$ as row vectors. This is done by simply considering adjoint operators.

Now, let $x = (x_n)_{n \geq 1}$ be a finite martingale in $L_2(\M) +\M$.
We set 
 \[
 s_c (x) = \Big ( \sum_{k \geq 1} \E_{k-1}|dx_k |^2 \Big )^{1/2}
 \
\text{and}
 \ \ 
 s_r (x) = \Big ( \sum_{k \geq1} \E_{k-1}| dx^*_k |^2 \Big)^{1/2}.
 \]
These are called the column and row conditioned square functions, respectively.
Let $\mathcal{F}_M$ denote  the set of all finite martingales in $L_1(\M) \cap \M$.
Define  $\h_E^c (\mathcal{M})$ (respectively,  $\h_E^r (\mathcal{M})$) as the completion  of $\mathcal{F}_M$  under the  norm $\| x \|_{\h_E^c}=\| s_c (x) \|_{E(\M)}$
(respectively,  $\| x \|_{\h_E^r}=\| s_r (x) \|_{E(\M)} $). We observe that  for every $x \in \mathcal{F}_M$, $\|x\|_{\h_E^c} =
\| (dx_n)\|_{E^{\rm cond}(\M; \ell_2^c)}$. Therefore, $\h_E^c(\M)$ may be viewed as a subspace of $E^{\rm cond}(\M; \ell_2^c)$.  More precisely, we consider the map
$\mathcal{D}: \mathcal{F}_M \to \mathcal{F}$ by setting $\mathcal{D}(x)= (dx_n)_{n\geq 1}$. Then $\mathcal{D}$ extends  to an isometry from $\h_E^c(\M)$ into $E^{\rm cond}(\M;\ell_2^c)$ which we will  denote by $\mathcal{D}_c$. In the sequel, we will make frequent use of the isometric embedding:
\[
U\mathcal{D}_c: \h_E^c(\M)  \to  
E(\M \overline{\otimes} B(\ell_2(\mathbb{N}^2))).
\]
We can make  similar  assertions for the row case. That is, $\h_E^r(\M)$ embeds isometrically into $E(\M \overline{\otimes} B(\ell_2(\mathbb{N}^2)))$.
We also need the diagonal Hardy space  $\h_E^d(\M)$  which is the space of all martingales whose martingale difference sequences belong to $E(\M \overline{\otimes} \ell_\infty)$ equipped with the norm $\|x\|_{\h_E^d} := \|(dx_n)\|_{E(\M \overline{\otimes} \ell_\infty)}$.  As above, we denote by $\mathcal{D}_d$ the isometric extension of $\mathcal{D}$ from $\h_E^d(\M)$ into $E(\M \overline{\otimes} \ell_\infty)$.  We remark that since, under our assumptions on $E$, conditional expectations are bounded on $E(\M)$, it follows that $\mathcal{D}_d(\h_E^d(\M))$ is a closed subspace of $E(\M \overline{\otimes} \ell_\infty)$. This shows in particular that $\h_E^d(\M)$ is a Banach space.
Using the natural von Neumann algebra embeddings, $\ell_\infty \subset B(\ell_2(\mathbb{N})) \subset B(\ell_2(\mathbb{N}^2))$, we can further state that $\h_E^d(\M)$ embeds isometrically into $E(\M \overline{\otimes} B(\ell_2(\mathbb{N}^2)))$. Consequently, $\h_E^d(\M)$, $\h_E^c(\M)$, and $\h_E^r(\M)$ are compatible as  all three isometrically embed into the  larger Banach space $E(\M \overline{\otimes} B(\ell_2(\mathbb{N}^2)))$.
We define the conditioned version of martingale Hardy spaces as follows.
If $1\leq p_E \leq q_E<2$, then
\begin{equation*}
\h_E(\mathcal{M}) = \h_E^d (\mathcal{M}) +  \h_E^c (\mathcal{M})
+  \h_E^r (\mathcal{M})
\end{equation*}
equipped with the  norm
\begin{equation*}
\| x \|_{\h_E} = \inf \big \{ \| w \|_{ \h_E^d} + \| y
\|_{ \h_E^c} + \| z \|_{ \h_E^r} \big \},
\end{equation*}
where the infimum is taken over all $w \in  \h_E^d
(\M), y \in  \h_E^c (\M)$, and $ z \in \h_E^r
(\M)$ such that $ x = w + y + z.$
 If $2 \leq p_E \leq q_E <\infty$, then
\begin{equation*}\h_E (\M) =  \h_E^d
(\M) \cap  \h_E^c (\M) \cap  \h_E^r (\M)
\end{equation*}equipped with the norm\begin{equation*}
\| x \|_{\h_E} = \max \big \{ \| x \|_{ \h_E^d}, \|x
\|_{ \h_E^c}, \| x \|_{ \h_E^r} \big \}.
\end{equation*}

For the case where $E=L_p(0,\infty)$, we will simply write $\H_p(\M)$, $\h_p(\M)$, ect. in place of $\H_{L_p}(\M)$, $\h_{L_p}(\M)$, ect.   From the noncommutative Burkholder-Gundy inequalities and  noncommutative Burkholder inequalities  proved in \cite{JX, PX}, we have
 \[
 \mathcal{H}_p (\M)= \h_p ({\M}) = L_p({\M})
 \]
with equivalent norms for all $1 < p < \infty$. The latter equality constitutes the primary topic of this paper. 

We collect some basic properties of these various Hardy spaces for further use.
\begin{proposition}\label{comp-Int} Let $1\leq p<q<\infty$ and assume  that $E\in {\rm Int}(L_p, L_q)$. 
\begin{enumerate}[{\rm(i)}]
\item $\h_E^d(\M)$ is complemented in $E(\M \overline{\otimes}\ell_\infty)$ and $\h_E^d(\M) \in {\rm Int}(\h_p^d(\M), \h_q^d(\M))$;
\item If $1<p<q<\infty$, then $\h_E^c(\M)$ is complemented in $E(\M \overline{\otimes} B(\ell_2(\mathbb{N}^2)))$ and 
 for $s\in\{c,r\}$, then $\h_E^s(\M) \in {\rm Int}(\h_p^s(\M), \h_q^s(\M))$.
\end{enumerate}
\end{proposition}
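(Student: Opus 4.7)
The plan for both parts is: (a) define explicit idempotent linear maps on a common dense subspace, (b) verify their boundedness on the endpoint spaces $L_p$ and $L_q$, and (c) invoke Proposition \ref{Operator-interpolation} to conclude boundedness, hence complementation, on $E$. The interpolation property $\h_E^s(\M) \in {\rm Int}(\h_p^s(\M), \h_q^s(\M))$ then follows from the standard retract principle combined with Corollary \ref{lifted-interpolation} applied to the ambient envelope $E(\M \overline{\otimes} \ell_\infty)$ (respectively $E(\M \overline{\otimes} B(\ell_2(\mathbb{N}^2)))$).

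For (i), I would consider the map $T$ defined on a dense subspace of $E(\M \overline{\otimes} \ell_\infty)$ by
\[
T\bigl((a_n)_n\bigr) = \bigl(\E_n(a_n) - \E_{n-1}(a_n)\bigr)_n.
\]
A direct verification shows that $T$ is idempotent, fixes exactly the martingale difference sequences, and has range (after extension) equal to $\mathcal{D}_d(\h_E^d(\M))$. Writing $T=T_1-T_2$ with $T_1((a_n))=(\E_n(a_n))_n$ and $T_2((a_n))=(\E_{n-1}(a_n))_n$, each $T_j$ is bounded on $L_r(\M \overline{\otimes} \ell_\infty)$ for $1<r<\infty$ by the noncommutative Stein inequality. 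Proposition \ref{Operator-interpolation} then extends $T$ to a bounded projection on $E(\M \overline{\otimes} \ell_\infty)$ with range $\mathcal{D}_d(\h_E^d(\M))$.

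For (ii), the target subspace $U\mathcal{D}_c(\h_E^c(\M))$ inside $E(\M \overline{\otimes} B(\ell_2(\mathbb{N}^2)))$ consists of those $\xi = \sum_n u_{n-1}(a_n) \otimes e_{n,1}$ for which $(a_n)$ is a martingale difference sequence. I would construct the projection as a composition of two bounded idempotents: first, the projection $P_U$ of $L_r(\M \overline{\otimes} B(\ell_2(\mathbb{N}^2)))$ onto the range of $U$, which is available for $1<r<\infty$ from Junge's construction of the conditioned $L_p$-spaces; second, the map $(a_n) \mapsto (a_n - \E_{n-1}(a_n))$, which is contractive on $L_r^{\rm cond}(\M;\ell_2^c)$ since $\E_{n-1}|a_n - \E_{n-1}(a_n)|^2 \leq \E_{n-1}|a_n|^2$. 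Conjugating the latter by $U$ and composing with $P_U$ yields a bounded idempotent on $L_r(\M \overline{\otimes} B(\ell_2(\mathbb{N}^2)))$ with range $U\mathcal{D}_c(\h_r^c(\M))$ for $r\in\{p,q\}$; Proposition \ref{Operator-interpolation} then lifts the construction to $E$. The row case is handled by taking adjoints.

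The main obstacle is the construction in (ii): one must arrange the compound projection as a single linear operator defined on a common dense subspace so that the same algebraic action extends boundedly to $L_p$, $L_q$, and $E$ simultaneously; the interpolation hypothesis then applies directly. Once this is set, the interpolation statement follows via the retract argument: given any $V$ bounded on both $\h_p^s(\M)$ and $\h_q^s(\M)$, the composition $iVQ$, where $i$ is the inclusion of the Hardy space into the envelope and $Q$ is the projection just constructed, is bounded on $L_p$ and $L_q$ of the envelope, hence on $E$ of the envelope by Proposition \ref{Operator-interpolation}; since $Q \circ i = \mathrm{id}$ on the Hardy space, restriction to $\h_E^s(\M)$ recovers $V$ as a bounded operator.
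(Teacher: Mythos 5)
Your overall strategy (explicit idempotents on the endpoint spaces, Proposition~\ref{Operator-interpolation} to pass to $E$, then the retract argument) is exactly the paper's, and part (i) uses the same map $\Theta((a_n))=(\E_n(a_n)-\E_{n-1}(a_n))$. One small misstep in (i): you justify boundedness on $L_r(\M\overline{\otimes}\ell_\infty)$ via the noncommutative Stein inequality, which both overshoots and undershoots. Since $L_r(\M\overline{\otimes}\ell_\infty)$ is just the $\ell_r$-direct sum of copies of $L_r(\M)$, the map is termwise a difference of two contractive conditional expectations and is therefore bounded (with constant $2$) for \emph{all} $1\le r<\infty$; this trivial argument is what covers the endpoint $r=1$, which item (i) requires (the hypothesis is $1\le p<q$) and which Stein's inequality does not reach.

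The genuine gap is in (ii). Your inner idempotent $R((a_n))=(a_n-\E_{n-1}(a_n))$ is indeed contractive on $L_r^{\rm cond}(\M;\ell_2^c)$ by the operator inequality you state, but its range is the set of sequences $(b_n)$ with $\E_{n-1}(b_n)=0$, \emph{without} the adaptedness condition $b_n\in L_r(\M_n)$. A martingale difference sequence must satisfy both, so $U\circ R\circ U^{-1}\circ P_U$ is a bounded idempotent onto a strictly larger subspace than $U\mathcal{D}_c(\h_r^c(\M))$, and the retract argument then interpolates the wrong space. The correct idempotent is $(a_n)\mapsto(\E_n(a_n)-\E_{n-1}(a_n))$, and the boundedness of the adapted part $(a_n)\mapsto(\E_n(a_n))$ on $L_r^{\rm cond}(\M;\ell_2^c)$ is precisely the nontrivial conditioned Stein-type estimate that underlies Junge's complementation theorem; it does not follow from the elementary inequality $\E_{n-1}|a_n-\E_{n-1}(a_n)|^2\le\E_{n-1}|a_n|^2$. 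The paper avoids rebuilding this by directly quoting the result of \cite{Ju} that $\h_r^c(\M)$ is complemented in $L_r(\M\overline{\otimes}B(\ell_2(\mathbb{N}^2)))$ by a projection $\Lambda$ independent of $r$ in the sense of interpolation, applying Proposition~\ref{Operator-interpolation} to $\Lambda$, and then identifying the range of $\Lambda$ on $E(\M\overline{\otimes}B(\ell_2(\mathbb{N}^2)))$ by a density argument (a step you also gloss over). To repair your proof, replace $R$ by the adapted difference projection and cite the conditioned Stein inequality, or simply invoke Junge's projection as the paper does.
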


\begin{proof} For the  first item, the complementation  follows immediately from the  simple fact that the map
$\Theta: L_r(\M \overline{\otimes} \ell_\infty)  \to  L_r(\M \overline{\otimes} \ell_\infty)$  defined by:
\[\Theta\big((a_n)_{n\geq 1}\big) = ( \E_{n}(a_n) -\E_{n-1}(a_n))_{n\geq 1}
\]
is  a bounded  projection for all $1\leq r <\infty$. By  the interpolation result stated in Proposition~\ref{Operator-interpolation}, $\Theta: E(\M \overline{\otimes} \ell_\infty)  \to  E(\M \overline{\otimes} \ell_\infty)$ is a bounded projection and it is clear that its range is $\mathcal{D}_d(\h_E^d(\M))$. The interpolation is an obvious consequence of the complementation result.

The second item is also a consequence of the known fact from \cite{Ju} that  if $1<r<\infty$, $\h_r^c(\M)$ is complemented in $L_r(\M \overline{\otimes} B(\ell_2(\mathbb{N}^2)))$.
Indeed, let $\Lambda: L_r(\M \overline{\otimes} B(\ell_2(\mathbb{N}^2))) \to L_r(\M \overline{\otimes} B(\ell_2(\mathbb{N}^2)))$ be the bounded projection whose range is $U\mathcal{D}_c(\h_r^c(\M))$ for all $1 < r<\infty$. It is known that $\Lambda$ is independent of $r$ in the sense of interpolation (\cite{Ju,JX}). We deduce   that  it is also a bounded projection from $E(\M \overline{\otimes} B(\ell_2(\mathbb{N}^2)))$ onto $U\mathcal{D}_c(\h_E^c(\M))$.  The statement about the range comes from the facts that  on one hand, $U\mathcal{D}_c(\mathcal{F}_M) \subset \Lambda\big[E(\M \overline{\otimes} B(\ell_2(\mathbb{N}^2)))\big]$, and on the other hand, 
$\bigcup_{n\geq 1}\big[ L_1(\M_n \overline{\otimes} B(\ell_2(\mathbb{N}^2)))
\cap \M_n \overline{\otimes} B(\ell_2(\mathbb{N}^2))]$ is a  dense subset of $E(\M \overline{\otimes} B(\ell_2(\mathbb{N}^2)))$ whose image  
 $\bigcup_{n\geq 1}\Lambda\big[ L_1(\M_n \overline{\otimes} B(\ell_2(\mathbb{N}^2)))
\cap \M_n \overline{\otimes} B(\ell_2(\mathbb{N}^2))] \subseteq U\mathcal{D}_c(\mathcal{F}_M)$.  This shows that $\Lambda\big[E(\M \overline{\otimes} B(\ell_2(\mathbb{N}^2)))\big]=U\mathcal{D}_c(\h_E^c(\M))$.
As in the first item, the statement on interpolation follows immediately from  the  complementation  result.
\end{proof}
\begin{remark}
Unlike the $L_p$-case,  general  descriptions of the duals of these  more general conditioned Hardy spaces  appear to be unavailable . One of the difficulties that arises in  trying to develop  such  duality theory lies on the
 fact that, in some cases, $E^*$ may not  be a function space. For instance, if $E=L_{r, \infty}$ for $1\leq p <r<q<\infty$, then $E\in {\rm Int}(L_p,L_q)$, has the Fatou property, but $E^*$ is highly nontrivial.
\end{remark}

\medskip

The next theorem is  the main result of this section. Its main feature is  that it gives  a decomposition that provides norms estimates simultaneously  for all  $p\in (1,2)$.
This fact is  very crucial in our approach in the next section. Before formally stating this result, we should clarify that when $1<p<2$, the noncommutative Burkholder inequalities  (\cite[Theorem~6.1]{JX})  imply that for each $w\in \{d,c,r\}$ there exists
a bounded linear map $\xi_p^w: \h_p^w(\M) \to L_p(\M)$. These maps are one to one as they come from the isomorphism $\h_p(\M) \approx L_p(\M)$ and the definition of $\h_p(\M)=\h_p^d(\M) +\h_c^d(\M) +\h_p^r(\M)$. As a result, any element of  $\h_p^w(\M)$ can be uniquely represented as measurable operator from $L_p(\M)$. This justifies the  use of identification in item~{\rm (i)} in the statement  of the  next theorem.

\begin{theorem}\label{simultaneous} There exists a family  $\{\kappa_p : 1<p<2\} \subset \mathbb{R}_+$  satisfying the following:
 if 
 $x\in L_1(\M) \cap L_2(\M)$, then there exist   $a \in \bigcap_{1<p<2} \h_p^d(\M)$, $b \in \bigcap_{1<p<2} \h_p^c(\M)$, and $c \in \bigcap_{1<p<2} \h_p^r(\M)$  such that:
\begin{enumerate}[{\rm(i)}]
\item  $x=a +b+ c$;
\item for every $1<p< 2$, the following inequality holds:
\[
\big\| a \big\|_{\h^d_p}
 + \big\| b \big\|_{\h_p^c} + \big\| c
\big\|_{\h_p^r}\leq \kappa_p \big\| x \big\|_p.
\]
\end{enumerate}
\end{theorem}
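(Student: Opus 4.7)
The plan hinges on producing a single decomposition $x = a + b + c$ that is manifestly $p$-independent. The standard Junge--Xu proof of the $1<p<2$ Burkholder inequality proceeds by duality from $p'>2$ and therefore yields a decomposition that depends on $p$; the novelty required here is to avoid this duality route and to build $a,b,c$ directly from $x$ and the filtration $(\M_n)$, so that the same triple witnesses the estimate for every $p\in(1,2)$.

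\emph{Construction.} Since $x\in L_1(\M)\cap L_2(\M)$, the martingale $(x_n)=(\E_n x)_{n\geq 1}$ has differences $dx_n\in L_1(\M)\cap L_2(\M)$ for every $n$. For each $n\geq 1$, pick a projection $e_{n-1}\in\M_{n-1}$ built in a Cuculescu-type fashion from $\E_{n-1}(|dx_n|^2)$, for instance the spectral projection of $\E_{n-1}(|dx_n|^2)^{1/2}$ onto $[0,\lambda_n]$ for a threshold sequence depending only on $x$. Decompose each difference as
\[
dx_n = e_{n-1} dx_n e_{n-1} + (1-e_{n-1}) dx_n + e_{n-1} dx_n (1-e_{n-1}),
\]
regard the first summand (after applying $\E_n-\E_{n-1}$ to restore the martingale property) as contributing to a column martingale $b$, the adjoint-symmetrized version as contributing to a row martingale $c$, and the remaining bad-set/cross terms as a diagonal martingale $a$. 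Since the projections $e_{n-1}$ depend only on $x$ and the filtration, the triple $(a,b,c)$ with $x=a+b+c$ is manifestly $p$-independent.

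\emph{Verification of (ii).} At the endpoint $p=2$ the bound is immediate from $L_2$-orthogonality of martingale differences. For a given $p\in(1,2)$, the column estimate $\|b\|_{\h_p^c}\leq \kappa_p\|x\|_p$ follows from the truncation property $\E_{n-1}|db_n|^2\leq \lambda_n^2\, e_{n-1}$, combined with Junge's noncommutative Doob inequality in $L_p$ and Junge--Xu's $L_p$-boundedness of the conditioned square function; the row estimate for $c$ in $\h_p^r$ is symmetric. The diagonal estimate $\|a\|_{\h_p^d}\leq \kappa_p\|x\|_p$ is extracted from the fact that the bad-set projections $1-e_{n-1}$ have small trace, together with a weak-type estimate in the spirit of Cuculescu combined with the trivial $L_2$-bound, and then a real-interpolation argument to get $L_p$.

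\emph{Main obstacle.} The crux is the diagonal $\h_p^d$ estimate: the bad-set contributions are large in operator norm but supported on projections of small trace, and controlling their $\ell_\infty$-valued $L_p$ norm with a \emph{fixed} family of projections --- uniformly in $x$ but for each $p\in(1,2)$ simultaneously --- will require a careful application of Randrianantoanina's noncommutative Davis/Gundy decomposition coupled with real interpolation between a noncommutative weak-$L_1$ endpoint and the trivial $L_2$ endpoint. A secondary technical point is ensuring that the symmetrization that produces $b\in\h_p^c$ and $c\in\h_p^r$ is compatible with the martingale property after extracting $(\E_n-\E_{n-1})$, and that the resulting operators lie in the intersection $\bigcap_{1<p<2}\h_p^{\bullet}(\M)$ rather than only in a single $\h_{p_0}^{\bullet}(\M)$.
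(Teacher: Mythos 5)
There is a genuine gap at the heart of your verification step, and it is precisely the difficulty the paper's argument is designed to circumvent. You build a \emph{single} decomposition $x=a+b+c$ from one Cuculescu-type family of projections attached to $x$, and then propose to obtain the $\h_p$-bounds from ``a weak-type estimate \dots combined with the trivial $L_2$-bound, and then a real-interpolation argument.'' But real interpolation bounds operators (or families of decompositions), not individual elements: from a fixed $a$ satisfying, say, $\|a\|_{L_{1,\infty}}\lesssim\|x\|_1$ and $\|a\|_{L_2}\lesssim\|x\|_2$, the most one can extract for a given $p$ with $1/p=(1-\theta)+\theta/2$ is $\|a\|_p\lesssim \|x\|_1^{1-\theta}\|x\|_2^{\theta}$, and this right-hand side is in general far larger than $\|x\|_p$ (the inequality $\|x\|_p\leq\|x\|_1^{1-\theta}\|x\|_2^{\theta}$ goes the wrong way). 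Relatedly, a Cuculescu construction needs a level parameter to make ``the bad-set projections have small trace'' meaningful; a single choice of thresholds $\lambda_n$ ``depending only on $x$'' yields a weak-type control at one scale only, which cannot recover $L_p$ for every $p\in(1,2)$ simultaneously.

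The paper keeps your basic ingredients but inserts exactly the layer that repairs this. It first writes $x=\sum_{\nu\in\Z}u_\nu$ using the fundamental lemma of interpolation for the couple $(L_1(\M),L_2(\M))$, so that $J(u_\nu,2^\nu)\leq 4K(x,2^\nu)$ and each $u_\nu$ is balanced at scale $2^\nu$. It then applies the weak-type decomposition of \cite[Theorem~3.1]{Ran21} (itself Cuculescu-based) to each $u_\nu$ separately, obtaining adapted triples whose $J$-functionals relative to $(L_{1,\infty},L_2)$ are dominated by $J(u_\nu,\cdot)$, and resums over $\nu$. The simultaneous $L_p$-estimates then follow because the same representation $\sum_\nu$ witnesses every discrete $(\theta,p,\underline{J})$-norm, together with the identities $L_p=[L_{1,\infty},L_2]_{\theta,p,\underline{J}}=[L_1,L_2]_{\theta,p,\underline{K}}$ valid for all $1<p<2$ at once. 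To salvage your direct approach you would have to run your construction at every dyadic level and reassemble along a near-optimal $K$-functional decomposition of $x$ --- which is essentially what the paper does. A secondary omission: the semifinite case requires a separate reduction, handled in the paper by cutting with finite-trace projections $e_k\in\M_1$ and passing to weak limits along an ultrafilter.
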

\begin{proof}
{\bf Case 1.}  Assume that $\M$ is finite  and $\T$ is a normalized trace.
The proof uses a weak-type decomposition from \cite{Ran21}.  
We consider the interpolation couple $(L_1(\M), L_2(\M))$. 

Let $x \in L_2(\M)$. According to \cite[Lemma~3.3.2]{BL}, there is a representation 
$x =\sum_{\nu \in \mathbb{Z}} u_\nu$ (convergent in $L_1(\M)$)  of $x$ satisfying,    for every $\nu\in \Z$,
\begin{equation}\label{fundamental}
J(u_\nu, 2^\nu) \leq 4 K(x, 2^\nu) .
\end{equation}
 Since $\T({\bf 1})=1$, we may apply   \cite[Theorem~3.1]{Ran21}. The statement of \cite[Theorem~3.1]{Ran21} is only for finite martingales but the construction used there can be applied verbatim to the  case of infinite $L_2(\M)$-bounded martingales.  
 There exists
  an absolute constant $\kappa>0$ such that, for each $\nu \in \mathbb{Z}$,  we can find   three 
adapted sequences $\a^{(\nu)}$, $\b^{(\nu)}$, and $\g^{(\nu)}$ in
$L_2(\M)$ such that:
\begin{equation}\label{adapted}
d_n(u_\nu)= \a_n^{(\nu)} + \b_n^{(\nu)} + \g_n^{(\nu)}  \  \text{for all}\   n \geq 1,
\end{equation}
\begin{equation}\label{J-a}
J\big(\sum_{n\geq 1}\a_n^{(\nu)} \otimes e_{n}, t\big) \leq \kappa J(u_\nu,t),\  
t>0,
\end{equation}
\begin{equation}\label{J-b}
 J\big(\big(\sum_{n\geq 1} \E_{n-1}(|\b_n^{(\nu)}|^2) \big)^{1/2}, t\big) \leq \kappa J(u_\nu,t), \ t>0,
\end{equation}
\begin{equation}\label{J-c}
J\big(\big(\sum_{n\geq 1} \E_{n-1}(|{\g_n^{(\nu)}}^*  |^2) \big)^{1/2}, t\big) \leq \kappa J(u_\nu,t), \ t>0,
\end{equation}
where the $J$-functional in the left hand side of the inequality
in \eqref{J-a}  is taken relative to the interpolation couple
$(L_{1,\infty}(\M \overline{\otimes} \ell_\infty), L_2(\M
\overline{\otimes} \ell_\infty))$ and those from the
left hand sides of \eqref{J-b}  and \eqref{J-c} are taken with respect to the
interpolation couple  $(L_{1,\infty}(\M), L_2(\M))$. We set
\[
\a_n= \sum_{\nu \in \mathbb{Z}} \a_n^{(\nu)}, \  \b_n= \sum_{\nu \in \mathbb{Z}} \b_n^{(\nu)}, \  \text{and}\  \g_n= \sum_{\nu \in \mathbb{Z}} \g_n^{(\nu)}.
\]
Then we obtain  three adapted sequence $\a=(\a_n)_n$, $\b=(\b_n)_n$, and $\g=(\g_n)_n$. Define the martingale difference sequences
\[
da_n=\a_n -\E_{n-1}(\a_n), \  db_n=\b_n -\E_{n-1}(\b_n),\  \text{and}\  dc_n=\g_n -\E_{n-1}(\g_n).
\]
We claim that the resulting martingales $a$, $b$, and $c$ satisfy the conclusion of the theorem. Indeed, it is clear from the construction that $x=a +b+c$.  For the second item, we will verify  the statement  separately for $a$, $b$, and $c$. We begin with the martingale $a$.  This will be deduced from the next lemma. For $0<\theta<1$, $1<p<2$, and an interpolation couple $(X_0,X_1)$,  $(X_0, X_1)_{\theta, p, \underline{J}}$ and $(X_0, X_1)_{\theta, p, \underline{K}}$ denote the discrete real interpolation methods  using the $J$-functionals and $K$-functionals, respectively. We refer to  \cite{BL} for definitions.
\begin{lemma}\label{lemma1} For every $0<\theta<1$ and  every $1<p<2$,
 \begin{equation*}
\Big\| ( \a_n)_{n\geq 1} \Big\|_{[L_{1,\infty}(\M \overline{\otimes} \ell_\infty) , L_2(\M
\overline{\otimes} \ell_\infty)]_{\theta,p;\underline{J}}} \leq
4\kappa\|x\|_{[L_1(\M), L_2(\M)]_{\theta,p;\underline{K}}}.
\end{equation*}
\end{lemma}
For $\nu \in \mathbb{Z}$, let $[\alpha]^{(\nu)}=\sum_n \alpha_n^{(\nu)} \otimes e_n$. The series $\sum_\nu [\a]^{(\nu)}$ is a representation of $\sum_n \alpha_n \otimes e_n$. Then the lemma follows immediately from combining \eqref{fundamental} and \eqref{J-a}.

Fix $1<p< 2$ and $1/p=(1-\theta) + \theta/2$. We appeal to  the  known facts  from \cite{PX3} that for any semifinite von Neumann algebra $\N$, we have
\[
L_p(\N)=\big[ L_{1,\infty}(\N), L_2(\N)\big]_{\theta, p, \underline{J}} \ \text{and}\  L_p(\N)=\big[ L_{1}(\N), L_2(\N)\big]_{\theta, p, \underline{K}}.\]
 The above lemma yields a constant $c_p$ such that 
\begin{equation*}
\Big( \sum_n \|\a_n\|_p^p\Big)^{1/p} =\Big\|  (\a_n)_{n\geq 1} \Big\|_{L_p(\M \overline{\otimes} \ell_\infty)} \leq
c_p\big\|x\big\|_{p}.
\end{equation*}
Applying the fact that  conditional expectations are contractive projections in $L_p(\M)$ 
 gives 
 \begin{equation*}
 \big\| a \big\|_{\h_p^d} \leq 2c_p \big\|x\big\|_p.
 \end{equation*}
  Now we sketch the argument for $b$.
 We consider the conditioned spaces involved as
 subspaces of  $L_r$-spaces associated to $\M \overline{\otimes}
B(\ell_2(\mathbb{N}^2))$  for appropriate values of $r$. Then for every
$\nu \in  \Z$,
\begin{equation*}
J\big(\big(\sum_{n\geq 1} \E_{n-1}(|{\b_n^{(\nu)}}
|^2)\big)^{1/2}, 2^\nu\big) =J\left(\b^{(\nu)}, 2^{\nu};
L_{1,\infty}(\M\overline{\otimes} B(\ell_2(\mathbb{N}^2))),
L_{2}(\M\overline{\otimes} B(\ell_2(\mathbb{N}^2)))\right).
\end{equation*}
Therefore,  \eqref{J-b} becomes,
\begin{equation}
 J\left(\b^{(\nu)}, 2^{\nu};
L_{1,\infty}(\M\overline{\otimes} B(\ell_2(\mathbb{N}^2))),
L_{2}(\M\overline{\otimes} B(\ell_2(\mathbb{N}^2)))\right)
\leq \kappa J(u_\nu, 2^\nu).
\end{equation}
Using similar argument as in the estimate of  norm of $\alpha$  with $\M \overline{\otimes}\ell_\infty$ replaced by $\M\overline{\otimes} B(\ell^2(\mathbb{N}^2))$, we get  as in Lemma~\ref{lemma1} that for every $0<\theta<1$ and $1<p<2$,
\begin{equation*}
\big\| \b\big\|_{[L_{1,\infty}(\M \overline{\otimes}
B(\ell_2(\mathbb{N}^2))) , L_{2}(\M \overline{\otimes}
B(\ell_2(\mathbb{N}^2)))]_{\theta,p;\underline{J}}} \leq
4\kappa\big\|x\big\|_{[L_1(\M), L_2(\M)]_{\theta,p;\underline{K}}} .
\end{equation*}
As in the previous case, applying real interpolations with appropriate values of $\theta$ and $p$ gives that for every $1<p<2$,
\begin{equation*}
\big\| \b\big\|_{L_p(\M \overline{\otimes}
B(\ell_2(\mathbb{N}^2)))}  \leq
c_p \big\| x \big\|_{p}.
\end{equation*}
This is equivalent to 
\begin{equation*}
\Big\| \big( \sum_{n\geq 1} \E_{n-1}(|\b_n|^2) \big)^{1/2}\Big\|_p  \leq
c_p \big\| x \big\|_{p}.
\end{equation*}
Using Kadison's inequality $\E_{n-1}(\b_n)^*\E_{n-1}(\b_n) \leq \E_{n-1}|\b_n|^2$ for all $n\geq 1$, we deduce that 
\begin{equation*}
\big\| b \big\|_{\h_p^c}  \leq
2c_p \big\| x \big\|_{p}.
\end{equation*}
Similar argument  can be applied to the sequence $\gamma$  to deduce the corresponding  estimate
\begin{equation*}
\big\| c \big\|_{\h_p^r}  \leq
2c_p \big\| x \big\|_{p}.
\end{equation*}
Combining the above three estimates clearly provides the second item in the statement of the theorem. This completes the proof for the finite case.

  \smallskip
 
 \noindent {\bf Case 2.}  Assume now that $\M$ is infinite.  Since $\M_*$ is separable,  the von Neumann algebra $\M$ is $\sigma$-finite.  We note first that Case~1 extends easily to any finite case with the trace $\T$  being not  necessarily normalized (with the same constants as  in the case of normalized trace).  Since there is a  trace preserving conditional expectation $\E_1: \M \to \M_1$, it is known that $\T|_{\M_1}$ remains semifinite.
 
 Fix an increasing  sequence of projections $(e_k)_{k\geq 1} \subset \M_1$  with $\T(e_k)<\infty$ for all $k\geq 1$ and such that $(e_k)_{k\geq 1}$ converges to ${\bf 1}$ for the strong operator topology. For each $k$, consider the finite von Neumann algebra 
  $(e_k \M e_k, \T|_{e_k\M e_k})$ with the
 filtration  $(e_k\M_n e_k)_{n\geq 1}$.   If we denote by $\E_n^{(k)}$ the trace preserving conditional expectation from $e_k\M e_k$ onto $e_k \M_n e_k$ then $\E_n^{(k)}$ is  just the restriction of $\E_n$ on $e_k \M e_k$.  This is the case since the $e_k$'s were chosen from the smallest subalgebra $\M_1$.
 Therefore, if $y \in e_k\M e_k$ then one can easily verify that
 \[
 \big\|y \big\|_{\h_p^d(e_k\M e_k)} = \big\|y \big\|_{\h_p^d(\M)}, \ \big\|y \big\|_{\h_p^c(e_k\M e_k)} = \big\|y \big\|_{\h_p^c(\M)}, \ \text{and} \ 
 \big\|y \big\|_{\h_p^r(e_k\M e_k)} = \big\|y \big\|_{\h_p^r(\M)}.
\]
 
Let $x \in L_2(\M)\cap L_1(\M)$. For each $k\geq 1$, $e_kxe_k \in L_2(e_k\M e_k)$.   From Case 1., there exists a decomposition
$
e_kxe_k= a^{(k)} + b^{(k)} + c^{(k)}$
  with the property that for  every $1<p <2$,
\[
\big\| a^{(k )}\big\|_{\h^d_p}
 + \big\| b^{(k)} \big\|_{\h_p^c} + \big\| c^{(k)}
\big\|_{\h_p^r}\leq \kappa_p \big\| e_kxe_k \big\|_p.
\]
Fix an ultrafilter $\mathcal{U}$ on   $\mathbb{N}$ containing the Fr\'echet filter. For any given   $1<p<2$, the weak-limit along the ultrafilter $\mathcal{U}$ of  the sequence $(a^{(k)})_{k\geq 1}$ exists in $\h_p^d(\M)$. It is crucial here to observe that   such weak-limits are independent of $p$ (since they are automatically weak-limits  of the same sequence in $L_1(\M\overline{\otimes}\ell_\infty) + L_2(\M\overline{\otimes}\ell_\infty)$). Similar observations can be made with the sequences $(b^{(k)})_{k\geq 1}$ and $(c^{(k)})_{k\geq 1}$.
Set 
\[
a= \text{w-}\lim_{k,\mathcal U} a^{(k)}, \ \  b =\text{ w-}\lim_{k,\mathcal U} b^{(k)},  \ \ \text{and}   \ \ c=\text{ w-}\lim_{k,\mathcal U} c^{(k)}
\]
 in $\h_p^d(\M)$, $\h_p^c(\M)$, and $\h_p^r(\M)$, respectively. We also observe  that  for every $1<p<2$,  it is easy to verify that $\lim_{k\to \infty}\|e_k xe_k -x\|_p=0$.  A fortiori, $\lim_{k, \mathcal{U}}\|e_k xe_k -x\|_p=0$.  All these facts lead to the decomposition:
\[
x=a +b +c.
\]
 Furthermore, for every $1<p<2$, we have
\begin{align*}
\big\| a \big\|_{\h^d_p}
 + \big\| b\big\|_{\h_p^c} + \big\| c
\big\|_{\h_p^r} &\leq \sup_k\Big\{\big\| a^{(k)}\big\|_{\h^d_p}
 + \big\| b^{(k)} \big\|_{\h_p^c} + \big\| c^{(k)}
\big\|_{\h_p^r}\Big\}\\
&\leq   \kappa_p \sup_k \big\| e_k xe_k \big\|_p  \leq 
 \kappa_p \big\|x\big\|_p
\end{align*}
where $\kappa_p$ is the constant from Case~1.
The proof is complete.
\end{proof}

\begin{remarks}  1)  Since the noncommutative Burkholder inequalities do not hold  for $p=1$, the validity of our simultaneous decomposition can not include the left endpoint of the interval $(1,2)$. On the other hand,  using known estimates from real interpolation $(\theta, p,K)$ and $(\theta, p,J)$ methods  of classical  Lebesgue spaces, we can derive  that  there is an absolute constant $C$ such that  for $1/p= (1-\theta) + \theta/2$, we have $\kappa_p \leq C\theta^{-2} (1-\theta)^{-1/2 -1/p}$. It follows that  
$\kappa_p$ is of order  $(p-1)^{-2}$ when $p\to 1$ and of order $(2-p)^{-1}$ when $p \to 2$. In particular, our method of proof does not allow  any extension of the decomposition to any of the  endpoints of the interval $[1,2]$. As we only get that $\kappa_p=O((p-1)^{-2})$ when $p\to 1$, our arguments  do not 
yield the optimal order for the constants for the noncommutative  Burkholder inequalities from \cite{Ran21}.

2) Junge and Perrin also considered simultaneous type decompositions  for conditioned Hardy spaces in \cite{Junge-Perrin}.
Our Theorem~\ref{simultaneous}  above   should be compared with  \cite[Theorem~5.9]{Junge-Perrin}.  See also Corollary~\ref{simultaneous2}  below for similar type simultaneous decompositions for  the case of martingale Hardy space norms.

\end{remarks}


\section{Burkholder's  inequalities in symmetric spaces}

The following is the principal result of this article. It provides  extensions of noncommutative Burkholder's inequalities for martingales in general noncommutative symmetric spaces.

\begin{theorem}\label{main}  Let $E$ be a symmetric Banach function space  on $(0,\infty)$ satisfying the Fatou property. Assume that  either $1<p_E \leq q_E < 2$ or $2<p_E \leq q_E<\infty$. Then 
\[
E(\M)= \h_E(\M).
\]
That is, a martingale $x=(x_n)_{n\geq 1}$ is bounded in $E(\M)$ if and only if  it belongs to $\h_E(\M)$ and
\[
\big\|x\big\|_{E(\M)} \simeq_E \big\|x\big\|_{\h_E}.
\]
\end{theorem}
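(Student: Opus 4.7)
My plan is to handle the two ranges of Boyd indices separately. The serious work lies in the range $1<p_E\leq q_E<2$; the case $2<p_E\leq q_E<\infty$ will then follow by trace duality. Throughout I fix auxiliary parameters $1<p<p_E$ and $q_E<q<2$ (respectively $2<p<p_E$ and $q_E<q<\infty$ in the dual case) and invoke Corollary~\ref{lifted-interpolation} to realize $E(\M)$ as the discrete $\underline{j}$-method space $(L_p(\M),L_q(\M))_{F,\underline{j}}$ for a suitable symmetric function space $F$ with nontrivial Boyd indices.

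For $1<p_E\leq q_E<2$, the inclusion $\h_E(\M)\hookrightarrow E(\M)$ is easy: the noncommutative Burkholder inequalities \eqref{nc2} provide bounded maps $\xi_r^w:\h_r^w(\M)\to L_r(\M)$ for $r\in\{p,q\}$ and $w\in\{d,c,r\}$, and Proposition~\ref{Operator-interpolation} (together with the interpolation content of Proposition~\ref{comp-Int}) transports these to bounded maps $\h_E^w(\M)\to E(\M)$. The reverse inclusion is the core of the proof and is where Theorem~\ref{simultaneous} enters. Given $x\in E(\M)$, choose a representation $x=\sum_{\nu\in\Z}u_\nu$ (convergent in $L_p(\M)+L_q(\M)$) with $u_\nu\in L_p(\M)\cap L_q(\M)$ and $\|\underline{j}(\{u_\nu\},\cdot;L_p,L_q)\|_F\lesssim\|x\|_{E(\M)}$, available by Corollary~\ref{lifted-interpolation}. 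Approximate each $u_\nu$ in the $L_p\cap L_q$-norm by elements of $L_1(\M)\cap L_2(\M)$ using Lemma~\ref{approximation2}, apply Theorem~\ref{simultaneous} to each approximant, and take weak limits along a free ultrafilter (as in Case~2 of the proof of Theorem~\ref{simultaneous}) to obtain a simultaneous decomposition $u_\nu=a_\nu+b_\nu+c_\nu$ with
\[
\|a_\nu\|_{\h_r^d}+\|b_\nu\|_{\h_r^c}+\|c_\nu\|_{\h_r^r}\leq\kappa_r\|u_\nu\|_r\quad\text{for every }r\in(1,2).
\]
Specializing to $r=p,q$ converts this into the $J$-functional bound
\[
J(a_\nu,2^\nu;\h_p^d,\h_q^d)\leq\max(\kappa_p,\kappa_q)\,J(u_\nu,2^\nu;L_p,L_q),
\]
and analogously for $b_\nu,c_\nu$. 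Assembling $a=\sum_\nu a_\nu$, $b=\sum_\nu b_\nu$, $c=\sum_\nu c_\nu$ gives $x=a+b+c$, and the definition of the $\underline{j}$-method together with the identification $(\h_p^w,\h_q^w)_{F,\underline{j}}=\h_E^w(\M)$ (a consequence of Proposition~\ref{comp-Int}, since the projections onto the Hardy components there are independent of the $L_r$-parameter, so interpolation commutes with complementation) yields $\|x\|_{\h_E}\lesssim_E\|x\|_{E(\M)}$.

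For $2<p_E\leq q_E<\infty$, the inclusion $E(\M)\hookrightarrow\h_E(\M)$ is easy, obtained by interpolating \eqref{nc1} at the endpoints $r=p,q$ to get bounded maps $E(\M)\to\h_E^w(\M)$ for each $w$, then taking the maximum. The reverse inclusion is obtained by trace duality. Since $E$ has the Fatou property, $E=E^{\times\times}$, and $p_E>2$ forces $q_{E^\times}<2$, so the previous case applies to $E^\times$. For $y\in E^\times(\M)$, decompose $y=y_d+y_c+y_r$ with $\|y_d\|_{\h_{E^\times}^d}+\|y_c\|_{\h_{E^\times}^c}+\|y_r\|_{\h_{E^\times}^r}\lesssim\|y\|_{E^\times}$. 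Using the isometric embedding $\mathcal D_d$ of $\h_\bullet^d$ into $E(\M\overline{\otimes}\ell_\infty)$, the isometric embeddings $U\mathcal D_c$ and $U\mathcal D_r$ of $\h_\bullet^c$ and $\h_\bullet^r$ into $E(\M\overline{\otimes}B(\ell_2(\mathbb{N}^2)))$, combined with H\"older's inequality in the symmetric space pair $(E,E^\times)$ and the key identity~\eqref{key-identity}, one derives the cross estimates $|\tau(x^*y_w)|\leq\|x\|_{\h_E^w}\|y_w\|_{\h_{E^\times}^w}$ for each $w\in\{d,c,r\}$. Summing over $w$ and taking the supremum over $\|y\|_{E^\times}\leq 1$ gives $\|x\|_{E(\M)}\lesssim_E\|x\|_{\h_E}$.

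The principal obstacle is the hard direction in the $p_E<2$ case. Two points demand genuine care: (i) bridging the simultaneous decomposition of Theorem~\ref{simultaneous} (stated only for inputs in $L_1\cap L_2$) with the $\underline{j}$-representation coming from the endpoints $L_p,L_q\subset L_1+L_2$, via the approximation of Lemma~\ref{approximation2} and a weak-ultrafilter limit that remains coherent at every $r\in(p,q)$; and (ii) identifying the abstract interpolation $(\h_p^w,\h_q^w)_{F,\underline{j}}$ with the concrete Hardy space $\h_E^w(\M)$, which relies on the crucial uniformity in $r$ of the complementing projections furnished by Proposition~\ref{comp-Int}.
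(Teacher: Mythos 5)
Your proposal is correct and follows the same overall strategy as the paper: the same four-part structure (easy inclusions by interpolating the $L_r$-Burkholder inequalities through Proposition~\ref{comp-Int} and Proposition~\ref{Operator-interpolation}; the hard direction for $1<p_E\leq q_E<2$ via the discrete $\underline{j}$-realization of Corollary~\ref{lifted-interpolation} combined with Theorem~\ref{simultaneous} and the $L_1\cap L_2$-approximation of Lemma~\ref{approximation2}; the range $2<p_E\leq q_E<\infty$ by K\"othe duality). You diverge in two technical respects, both legitimate. First, you pass to the weak ultrafilter limit at the level of each individual $u_\nu$ so as to decompose $u_\nu\in L_p(\M)\cap L_q(\M)$ itself, whereas the paper keeps the approximants, decomposes $x^{(\epsilon)}=\sum_\nu u_\nu^{(m_\nu)}$, and removes the $\epsilon$ only at the end (Lemma~\ref{approximation-2}); your order of limits works because the weak limits are limits of a single sequence in the sum space, hence coherent in $r$, and the $J$-functional bounds survive by weak lower semicontinuity of the norm (note, though, that the resulting bound for $u_\nu$ makes sense only for $r\in[p,q]$, not all of $(1,2)$ --- which is all you use). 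Second, and more substantively, you replace the paper's summation mechanism --- weakly unconditionally Cauchy series in $E(\M\overline{\otimes}\ell_\infty)$ and $E(\M\overline{\otimes}B(\ell_2(\mathbb{N}^2)))$, unconditional convergence in the reflexive sum space, and the semi-embedding $E(\N)\hookrightarrow L_p(\N)+L_q(\N)$ furnished by the Fatou property --- by the retract identity $(\h_p^w,\h_q^w)_{F,\underline{j}}=\h_E^w(\M)$. That identity does hold, precisely because the projections $\Theta$ and $\Lambda$ of Proposition~\ref{comp-Int} are the same operator at both endpoints, so the $\underline{j}$-method commutes with the complementation; this buys you a cleaner conclusion, but you should still justify that $\sum_\nu a_\nu$ converges in $\h_p^w(\M)+\h_q^w(\M)$ (it does: a.e.\ finiteness of $\underline{j}(\{a_\nu\},\cdot)$ forces absolute convergence of the series in the sum space). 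Finally, in the duality step you need both $q_{E^\times}<2$ and $p_{E^\times}>1$; the latter comes from $q_E<\infty$ via \cite[Proposition~2.b.2]{LT} and should be stated.
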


As noted in the introduction, the preceding theorem  solves  positively a question raised in \cite{Jiao2}. The new result here is the case  where $1<p_E\leq q_E<2$.
The case $2< p_E \leq q_E <\infty$ was established by Dirksen in \cite[Theorem~6.2]{Dirksen2} but we will also provide  an alternative approach  for this range. We remark that under the assumptions of Theorem~\ref{main},  the Banach function space $E$  is fully symmetric in the sense of  \cite{DDP4} but this extra property will not  be needed  in the proof.

\medskip

We divide the proof into four  separate parts according to $1<p_E \leq q_E < 2$ or $2<p_E\leq q_E<\infty$, each case involving two inequalities.  The main difficulty  in the proof  is Part~II below. Part ~III  will be  deduced from Part~II via duality. The other two parts will be derived   from standard use of interpolations of linear operators. 

\subsection{The case ${ 1<p_E \leq q_E <2}$} Let $E$ be a  symmetric Banach function space on $(0,\infty)$ with the Fatou property and  satisfying $1<p_E\leq q_E<2$.  Throughout the proof, we fix $p$ and $q$ so that $1<p<p_E\leq q_E<q<2$. In this case, $E \in {\rm Int}(L_p,L_q)$.

\bigskip

\noindent{\bf Part~I.} 
We will verify  that there exists a constant $c_E$ such that for every $x \in \h_E(\M)$,  we have
\[
\big\|x\big\|_{E(\M)} \leq c_E \big\|x\big\|_{\h_E}.
\]
This is a simple consequence of the noncommutative Burkholder inequalities and 
Proposition~\ref{comp-Int}. Indeed, for $1<r<2$, $\big\|x\big\|_r \leq c_r\big\|x\big\|_{\h_r^d}$. By interpolation, we deduce that $\big\|x\big\|_{E(\M)} \leq c_E\big\|x\big\|_{\h_E^d(\M)}$. Similar arguments also give $\big\|x\big\|_{E(\M)} \leq c_E\big\|x\big\|_{\h_E^c(\M)}$ and $\big\|x\big\|_{E(\M)} \leq c_E\big\|x\big\|_{\h_E^r(\M)}$.

 Now, assume that  $x\in \h_E(\M)$ is such that $x =w +y +z$ where $w \in \h_E^d(\M)$, $y \in \h_E^r(\M)$, and $z \in \h_E^r(\M)$.  Then  we have
\[
\big\|x\big\|_{E(\M)} \leq c_E \big( \big\|w\big\|_{\h_E^d} + \big\|y\big\|_{\h_E^c} + \big\|z\big\|_{\h_E^r}\big).
\]
 Taking the infimum over all decompositions $x=w +y +z$ provides the desired inequality.
\qed

\medskip

We should emphasize that   the inequality from Part~I  may be interpreted as inclusion mappings  in the following sense:   if $w\in\{d,c,r\}$,  then there is a natural  bounded  map $\xi_E^w: \h_E^w(\M) \to E(\M)$.  In fact,  $\xi_E^w$  may be taken as the bounded extension of the map $(\mathcal{F}_M, \|\cdot\|_{\h_E^w}) \to  E(\M)$  defined by $(x_n)_{n\geq 1} \mapsto \lim_{n\to \infty} x_n$ (since  $(x_n)$ is a finite sequence, the limit should be  understood  as the final value  of $(x_n)$).
We claim that these maps are one to one. To verify this claim, we consider the following diagram:
  \[
\begin{CD}
\h_E^w(\M) @>{\xi_E^w}>>  E(\M)\\
@V{\iota}VV @VV{j}V\\
\h_p^w(\M) +\h_q^w(\M)  @>{\xi_{p,q}^w}>>  L_p(\M) + L_q(\M),
\end{CD}
\]
where $\iota$ is the inclusion map from  the interpolation in Proposition~\ref{comp-Int}, $j$ is the formal inclusion, and $\xi_{p,q}^w$ is the combination of $\xi_p^w$ and 
$\xi_q^w$ introduced in the previous section.  When $z\in \mathcal{F}_M$, we clearly have $\xi_{p,q}^w \iota(z)=j \xi_E^w(z)$. Therefore,  the above diagram commutes. Since $\iota$ and $\xi_{p,q}^w$ are one to one, so is $\xi_E^w$. As a consequence, any element  $a \in \h_E^w(\M)$  is   uniquely associated  with the operator  $\xi_E^w(a) \in E(\M)$ which we will still denote by $a$. In Part~II below, statement   such as $x=a +b + c$ for $x\in E(\M)$, $ a \in \h_E^d(\M)$, $ b \in \h_E^c(\M)$, and $ c \in \h_E^r(\M)$ should be understood to mean $x=\xi_E^d(a) +\xi_E^c(b) + \xi_E^r(c)$.

\bigskip

\noindent{\bf Part~II.}
We consider now the reverse inequalities. That is,  there exists a constant $\beta_E$ such that for every $x \in E(\M)$,
\[
\big\|x\big\|_{\h_E} \leq \beta_E \big\|x\big\|_{E(\M)}.
\]
The  proof  is much more involved and requires several steps.
 Our approach relies on  two essential facts.   As  stated in Corollary~\ref{lifted-interpolation},   noncommutative symmetric spaces  have concrete representations as  interpolation spaces. The second fact is  the simultaneous decomposition obtained in the previous section.
 
According to Corollary~\ref{lifted-interpolation}, we may  fix  a  symmetric Banach function space $F$ on $(0,\infty)$ with nontrivial Boyd indices and such that for any semifinite von Neumann algebra $\mathcal{N}$, we have:
\[
E(\mathcal{N}) = \big[ L_p(\mathcal{N}), L_q(\mathcal{N})\big]_{F, \underline{j}},
\]
where $[\cdot, \cdot]_{F,\underline{j}}$ is the interpolation method introduced in the previous section.
We begin with the following intermediate result.

\begin{lemma}\label{approximation} Let $x \in L_p(\M) \cap L_q(\M)$. For every $\epsilon>0$, there exist $x^{(\epsilon)} \in  L_p(\M) \cap L_q(\M)$,  martingales $a^{(\epsilon)} \in \h_E^d(\M)$, $b^{(\epsilon)} \in \h_E^c(\M)$, and $c^{(\epsilon)} \in \h_E^r(\M)$  with:
\begin{enumerate}[{\rm(1)}]
\item $\displaystyle{\big\| x-x^{(\epsilon)}\big\|_{L_p(\M) \cap L_q(\M)} <\epsilon}$;
\item $x^{(\epsilon)} =a^{(\epsilon)} + b^{(\epsilon)}+ c^{(\epsilon)}$;
\item $\displaystyle{\big\|a^{(\epsilon)}\big\|_{\h_E^d} +\big\|b^{(\epsilon)}\big\|_{\h_E^c} +\big\|c^{(\epsilon)}\big\|_{\h_E^r} \leq \eta_E \big\|x\big\|_{E(\M)}}$.
\end{enumerate}
\end{lemma}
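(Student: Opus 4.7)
The plan is to lift the simultaneous decomposition of Theorem~\ref{simultaneous}, which produces decompositions controlled in the $L_r$-scale for $1<r<2$, through a $\underline{j}$-interpolation description of the symmetric ambient space. Fix $1<p<p_E\leq q_E<q<2$ and apply Corollary~\ref{lifted-interpolation} to pick a symmetric function space $F$ with nontrivial Boyd indices so that $E(\N)=(L_p(\N),L_q(\N))_{F,\underline{j}}$ with equivalent norms for every semifinite von Neumann algebra $\N$. Combined with Proposition~\ref{comp-Int}, which exhibits each of $\h_r^d(\M),\h_r^c(\M),\h_r^r(\M)$ as a complemented subspace of $L_r(\M\overline{\otimes}\ell_\infty)$ or $L_r(\M\overline{\otimes}B(\ell_2(\mathbb{N}^2)))$ via projections that are independent of $r$ in the interpolation sense, this yields the identification
\[
\h_E^w(\M)=\bigl(\h_p^w(\M),\h_q^w(\M)\bigr)_{F,\underline{j}} \quad (w\in\{d,c,r\})
\]
with equivalent norms, which is the key vehicle for transferring $L_r$-estimates into $\h_E^w$-estimates.

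Starting from $x\in L_p(\M)\cap L_q(\M)\subset E(\M)$, I would invoke the discrete $\underline{j}$-representation to write $x=\sum_{\nu\in\Z}u_\nu$ convergent in $L_p(\M)+L_q(\M)$ with $u_\nu\in L_p(\M)\cap L_q(\M)$ and $\|\underline{j}(\{u_\nu\},\cdot)\|_F\leq 2\|x\|_{E(\M)}$. Because Theorem~\ref{simultaneous} requires its input in $L_1(\M)\cap L_2(\M)$, the next step is to apply Lemma~\ref{approximation2} to each $u_\nu$, taking $\tilde u_\nu=u_\nu e^{|u_\nu|}([1/m_\nu,m_\nu])\in L_1(\M)\cap L_2(\M)$ with $m_\nu$ large enough that $\|u_\nu-\tilde u_\nu\|_{L_p(\M)\cap L_q(\M)}\leq \epsilon\,2^{-|\nu|-2}$. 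A critical observation is that $\mu(\tilde u_\nu)\leq\mu(u_\nu)$, so $\|\tilde u_\nu\|_r\leq\|u_\nu\|_r$ for $r=p,q$, and hence $J(\tilde u_\nu,2^\nu;L_p,L_q)\leq J(u_\nu,2^\nu;L_p,L_q)$.

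Applying Theorem~\ref{simultaneous} then produces $\tilde u_\nu=a_\nu+b_\nu+c_\nu$ satisfying
\[
\|a_\nu\|_{\h_r^d}+\|b_\nu\|_{\h_r^c}+\|c_\nu\|_{\h_r^r}\leq\kappa_r\|\tilde u_\nu\|_r, \quad r\in(1,2),
\]
so, setting $\kappa=\max(\kappa_p,\kappa_q)$, the pointwise bound
\[
J(a_\nu,2^\nu;\h_p^d(\M),\h_q^d(\M))\leq \kappa\,J(u_\nu,2^\nu;L_p(\M),L_q(\M))
\]
and its analogues for $b_\nu,c_\nu$ in the couples $(\h_p^c,\h_q^c)$ and $(\h_p^r,\h_q^r)$ follow at once. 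The sequences $\{a_\nu\},\{b_\nu\},\{c_\nu\}$ are then valid $\underline{j}$-representations in the respective conditioned Hardy couples, and the identification from paragraph one yields $a^{(\epsilon)}:=\sum_\nu a_\nu\in\h_E^d(\M)$, $b^{(\epsilon)}:=\sum_\nu b_\nu\in\h_E^c(\M)$, $c^{(\epsilon)}:=\sum_\nu c_\nu\in\h_E^r(\M)$ with
\[
\|a^{(\epsilon)}\|_{\h_E^d}+\|b^{(\epsilon)}\|_{\h_E^c}+\|c^{(\epsilon)}\|_{\h_E^r}\lesssim \kappa\|x\|_{E(\M)},
\]
which proves~(3). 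Setting $x^{(\epsilon)}:=a^{(\epsilon)}+b^{(\epsilon)}+c^{(\epsilon)}=\sum_\nu\tilde u_\nu$ gives~(2) by construction, and for~(1) I observe that $x-x^{(\epsilon)}=\sum_\nu(u_\nu-\tilde u_\nu)$ converges absolutely in $L_p(\M)\cap L_q(\M)$ with total norm below $\epsilon$ by the geometric choice of $m_\nu$, whence $x^{(\epsilon)}\in L_p(\M)\cap L_q(\M)$.

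The main obstacle I expect is the clean proof of the interpolation identification $\h_E^w(\M)=(\h_p^w(\M),\h_q^w(\M))_{F,\underline{j}}$ with equivalent norms. Proposition~\ref{comp-Int} only asserts membership in ${\rm Int}$ and not the explicit $(F,\underline{j})$-description; to secure the latter one must exploit that the projections $\Theta$ (onto $\mathcal{D}_d(\h_r^d(\M))$) and $\Lambda$ (onto $U\mathcal{D}_c(\h_r^c(\M))$) are independent of $r$ in the interpolation sense, and then transfer a $\underline{j}$-representation from the ambient $L_p+L_q$ space onto the conditioned Hardy couple, and conversely. Once this identification is in place, the remainder of the argument is bookkeeping.
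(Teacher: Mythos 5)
Your proposal follows the same skeleton as the paper's proof: the discrete $\underline{j}$-representation $x=\sum_{\nu}u_\nu$ coming from Corollary~\ref{lifted-interpolation}, the truncation of each $u_\nu$ into $L_1(\M)\cap L_2(\M)$ via Lemma~\ref{approximation2} with geometrically summable errors and the monotonicity $J(\tilde u_\nu,t)\leq J(u_\nu,t)$, and the termwise application of Theorem~\ref{simultaneous} to get $J$-functional bounds for $a_\nu$, $b_\nu$, $c_\nu$ in the Hardy couples. Where you genuinely diverge is in the reassembly. The paper never formulates the identity $\h_E^w(\M)=(\h_p^w(\M),\h_q^w(\M))_{F,\underline{j}}$; instead it transports everything through the isometries $\mathcal{D}_d$ and $U\mathcal{D}_c$ into $E(\M\overline{\otimes}\ell_\infty)$ and $E(\M\overline{\otimes}B(\ell_2(\mathbb{N}^2)))$, shows the resulting series are weakly unconditionally Cauchy there with partial sums bounded by $\kappa_E\|x\|_{E(\M)}$, uses reflexivity of $L_p+L_q$ to obtain convergence of the series in the sum space, invokes the semi-embedding $E(\N)\hookrightarrow L_p(\N)+L_q(\N)$ (a consequence of the Fatou property) to conclude the limits lie in the $E$-spaces with the same bound, and only then uses the complementation of Proposition~\ref{comp-Int} to pull back to $\h_E^d$, $\h_E^c$, $\h_E^r$. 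Your route is cleaner if the interpolation identity is available, and the one-sided inclusion you actually need, $(\h_p^w(\M),\h_q^w(\M))_{F,\underline{j}}\hookrightarrow\h_E^w(\M)$, does hold: transport a representation through the isometric embeddings into the ambient couple, where the $(F,\underline{j})$-space is $E$ of the ambient algebra, and use that the $r$-independent projections $\Theta$ and $\Lambda$ fix the limit, so that it lands in $\mathcal{D}_d(\h_E^d(\M))$, respectively $U\mathcal{D}_c(\h_E^c(\M))$. The one step you do not address is why $\sum_\nu a_\nu$ (and its $b$, $c$ analogues) converges in $\h_p^w(\M)+\h_q^w(\M)$: without convergence the family $\{a_\nu\}$ is not a representation in the sense of \eqref{series}, so the norm $\|\cdot\|_{F,\underline{j}}$ cannot be evaluated on it. This is precisely what the paper's wuC-plus-reflexivity-plus-semi-embedding argument supplies. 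Alternatively, it follows from your $J$-estimates: since $\underline{j}(\{a_\nu\}_\nu,\cdot)\leq\kappa\,\underline{j}(\{u_\nu\}_\nu,\cdot)\in F\subset L_1+L_\infty$, finiteness at $t=1/2$ gives $\sum_{\nu\geq 0}\|a_\nu\|_{\h_q^w}<\infty$, while local integrability on $(0,1)$ gives $\sum_{\nu<0}J(a_\nu,2^\nu)<\infty$ and hence $\sum_{\nu<0}\|a_\nu\|_{\h_p^w}<\infty$, i.e.\ absolute convergence in the sum space. With that point supplied, your argument closes and yields the same conclusion with constant $\eta_E$ depending only on $E$, $p$, $q$.
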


\begin{proof}
 Let $x \in L_p(\M) \cap L_q(\M)$ and $\epsilon>0$.  Using  the interpolation couple $\big(L_p(\M), L_q(\M)\big)$,
 fix a representation $x=\sum_{\nu \in \mathbb{Z}} u_\nu$
 (convergent in $L_p(\M)+L_q(\M)$) such that
 \begin{equation}\label{rep-norm}
 \Big\| \underline{j}\big(\{u_\nu\}_\nu, \cdot\big)\Big\|_F \leq 2 \big\|x \big\|_{F, \underline{j}}.
 \end{equation}
Note that the $u_\nu$'s belong to $L_p(\M) \cap L_q(\M)$. Using Lemma~\ref{approximation2},
for each $\nu \in \mathbb{Z}$, we may choose  $u_\nu^{(m_\nu)} \in L_1(\M) \cap L_2(\M)$ satisfying the following properties:
\begin{enumerate}
\item $\displaystyle{\big\| u_\nu^{(m_\nu)} -u_\nu \big\|_{L_p(\M) \cap L_q(\M)} \leq  \frac{\epsilon}{4^{|\nu| +1}}}$;
\item $\displaystyle{\big\| u_\nu^{(m_\nu)} \big\|_q \leq \big\| u_\nu \big\|_q}$;
\item $\displaystyle{\big\| u_\nu^{(m_\nu)} \big\|_p \leq \big\| u_\nu \big\|_p}$.
\end{enumerate}
The last two conditions imply that for every  $\nu \in \mathbb{Z}$ and  every $t>0$,
\begin{equation*}
J\big(u_\nu^{(m_\nu)}, t\big) \leq J\big(u_\nu, t \big),
\end{equation*}
which furthermore leads to the following inequality: 
\begin{equation}\label{j-inequality}
\underline{j}\big(\{u_\nu^{(m_\nu)}\}_\nu,\cdot\big) \leq \underline{j}\big(\{u_\nu\}_\nu, \cdot\big).
\end{equation} 
We define the operator $x^{(\epsilon)}$ by setting:
\[
x^{(\epsilon)}=\sum_{\nu \in \mathbb{Z}}  u_\nu^{(m_\nu)}.
\]
Then it satisfies the following  norm estimates:
\begin{align*}
\big\| x^{(\epsilon)} -x\big\|_{L_p(\M)  \cap L_q(\M)} &\leq  \sum_{\nu \in \mathbb{Z}} \big\|u_\nu^{(m_\nu)}-u_\nu \big\|_{L_p(\M) \cap L_q(\M)}\\
&\leq \sum_{\nu \in \mathbb{Z}} \frac{\epsilon}{4^{|\nu| +1}} \leq \epsilon.
\end{align*}
In particular, $x^{(\epsilon)} \in L_p(\M) \cap L_q(\M)$ and the first item in the statement of Lemma~\ref{approximation} is satisfied. The crucial fact here is that all  $u_\nu^{(m_\nu)}$'s in the representation of $x^{(\epsilon)}$  belong to $L_1(\M) \cap L_2(\M)$ so that Theorem~\ref{simultaneous} can be applied to each of the $u_\nu^{(m_\nu)}$'s. That is, for every $\nu \in \mathbb{Z}$, there exist $a_\nu \in \cap_{1<s<2} \h_s^d(\M)$, $b_\nu \in \cap_{1<s<2} \h_s^c(\M)$, and $c_\nu \in \cap_{1<s<2} \h_s^r(\M)$   satisfying:
\begin{equation}
u_\nu^{(m_\nu)}=a_\nu +b_\nu + c_\nu
\end{equation}
and if $s\in \{p,q\}$, then
\begin{equation}\label{double-decomposition}
\big\|a_\nu \big\|_{\h_s^d} + \big\|b_\nu \big\|_{\h_s^c} +\big\|c_\nu \big\|_{\h_s^r} \leq 
\kappa(p,q) \big\| u_\nu^{(m_\nu)} \big\|_s
\end{equation}
where $\kappa(p,q)=\max\{\kappa_p, \kappa_q\}$.

For each $\nu \in \mathbb{Z}$,  we consider $\mathcal{D}_d(a_\nu)\in L_p(\M \overline{\otimes}\ell_\infty) \cap L_q(\M \overline{\otimes}\ell_\infty)$,  
$U\mathcal{D}_c(b_\nu) \in L_p(\M \overline{\otimes} B(\ell_2(\mathbb{N}^2))) \cap L_q(\M \overline{\otimes} B(\ell_2(\mathbb{N}^2)))$, and
$U\mathcal{D}_c(c_\nu^*) \in L_p(\M \overline{\otimes} B(\ell_2(\mathbb{N}^2))) \cap L_q(\M \overline{\otimes} B(\ell_2(\mathbb{N}^2)))$.

First, we observe that \eqref{double-decomposition} can be reinterpreted using the $J$-functionals as follows:

\begin{equation}\label{J-a1}
J\big( \mathcal{D}_d(a_\nu), t) \leq \kappa(p,q) J(u_\nu^{(m_\nu)},t),\  
t>0,
\end{equation}
\begin{equation}\label{J-b1}
 J\big(U\mathcal{D}_c(b_\nu), t\big) \leq \kappa(p,q) J(u_\nu^{(m_\nu)},t), \ t>0,
\end{equation}
\begin{equation}\label{J-c1}
J\big(U\mathcal{D}_c(c_\nu^*), t\big) \leq \kappa(p,q) J(u_\nu^{(m_\nu)},t), \ t>0
\end{equation}
where the $J$-functional  on the left side of  \eqref{J-a1}  is taken using the couple
$[L_p(\M \overline{\otimes}\ell_\infty),L_q(\M \overline{\otimes}\ell_\infty)]$ and those on the left sides   of inequalities \eqref{J-b1} and \eqref{J-c1} were computed using the couple $[ L_p(\M \overline{\otimes} B(\ell_2(\mathbb{N}^2))), L_q(\M \overline{\otimes} B(\ell_2(\mathbb{N}^2)))]$.

We need the following properties of the  sequences $\{\mathcal{D}_d(a_\nu)\}_{\nu \in \mathbb{Z}}$, 
$\{U\mathcal{D}_c(b_\nu)\}_{\nu \in \mathbb{Z}}$, and $\{U\mathcal{D}_c(c_\nu^*)\}_{\nu \in \mathbb{Z}}$. We refer to \cite{D1} for definition and criterion for unconditionally Cauchy series in Banach spaces.
\begin{sublemma}\label{sublemma} 
\begin{enumerate}[{\rm(1)}]
 \item  $ \sum_{\nu \in \mathbb{Z}} \mathcal{D}_d(a_\nu)$  is  a weakly unconditionally Cauchy series  in  $E(\M \overline{\otimes} \ell_\infty)$. 
\item $ \sum_{\nu \in \mathbb{Z}} U\mathcal{D}_c(b_\nu)$  and $\sum_{\nu \in \mathbb{Z}} U\mathcal{D}_c(c_\nu^*)$ are weakly unconditionally Cauchy   series in $E(\M\overline{\otimes} B(\ell_2(\mathbb{N}^2)))$. 
\end{enumerate}
Moreover, there exists a constant $\kappa_E$ such that:
\begin{equation*}
\begin{split}
\max\Big\{\sup_{N\geq 1}\|S_N(a)\|_{E(\M \overline{\otimes} \ell_\infty)}, \sup_{N\geq 1}\|S_N(b)\|_{E(\M \overline{\otimes} B(\ell_2(\mathbb{N}^2)))}, &\sup_{N\geq 1}\|S_N(c^*)\|_{E(\M \overline{\otimes} B(\ell_2(\mathbb{N}^2)))} \Big\}\\
&\leq  \kappa_E\|x\|_{E(\M)},
\end{split}
\end{equation*}
where for each $N\geq 1$, $S_N(a)=\sum_{|\nu|\leq N} \mathcal{D}_d(a_\nu)$, $S_N(b)=\sum_{|\nu|\leq N} U\mathcal{D}_c(b_\nu)$, and $S_N(c^*)=\sum_{|\nu|\leq N} U\mathcal{D}_c(c_\nu^*)$.
\end{sublemma}

To verify the first item in Sublemma~\ref{sublemma},  we note  that if $S$ if a finite subset of $\mathbb{Z}$ then  it follows from \eqref{j-inequality} and  \eqref{J-a1} that 
\[
\underline{j}\big(\{ \mathcal{D}_d(a_\nu)\}_{\nu \in S}, \cdot\big) \leq \kappa(p,q)\underline{j}\big(\{u_\nu^{(m_\nu)}\}_{\nu \in S}, \cdot\big) \leq \kappa(p,q)\underline{j}\big(\{u_\nu\}_{\nu}, \cdot\big).
\]
By the definition of $[\cdot, \cdot]_{F, \underline{j}}$, for every finite sequence of scalars $(\theta_\nu)_{\nu\in S}$ with $|\theta_\nu|=1$, we have 
\[
\big\| \sum_{\nu \in S} \theta_\nu \mathcal{D}_d(a_\nu) \big\|_{[L_p(\M \overline{\otimes} \ell_\infty),L_q(\M \overline{\otimes} \ell_\infty)]_{F, \underline{j}}} \leq \kappa(p,q)  \big\| \underline{j}\big(\{u_\nu\}_{\nu}, \cdot\big)\|_F \leq 2\kappa(p,q) \big\|x \big\|_{F,\underline{j}}
\]
where the last inequality is from \eqref{rep-norm}. Now we use the facts that 
\[
E(\M \overline{\otimes} \ell_\infty) =\big[L_p(\M \overline{\otimes} \ell_\infty), L_q(\M \overline{\otimes} \ell_\infty)\big]_{F, \underline{j}} \ \text{and} \ 
E(\M) =\big[L_p(\M), L_q(\M)\big]_{F, \underline{j}}
\]
to  deduce that there exists a constant  $\kappa_E$ such that: 
\begin{equation}\label{Cauchy-a}
\big\| \sum_{\nu \in S} \theta_\nu \mathcal{D}_d(a_\nu) \big\|_{E(\M \overline{\otimes} \ell_\infty)} \leq \kappa_E \big\|x \big\|_{E(\M)}.
\end{equation}
Since  this is the case for any arbitrary finite subset of $\mathbb{Z}$, it proves that
 the series $\sum_{\nu \in \mathbb{Z}} \mathcal{D}_d(a_\nu)$ is weakly unconditionally Cauchy in $E(\M \overline{\otimes} \ell_\infty)$. 
 
The proof of the second item follows the same pattern. As above, if $S$ is a finite subset  of $\mathbb{Z}$, then it follows from \eqref{j-inequality} and \eqref{J-b1} that:
\[
\underline{j}\big(\{U\mathcal{D}_c(b_\nu)\}_{\nu \in S}, \cdot\big) \leq \kappa(p,q)\underline{j}\big(\{u_\nu^{(m_\nu)}\}_{\nu \in S}, \cdot\big) \leq \kappa(p,q)\underline{j}\big(\{u_\nu\}_{\nu}, \cdot\big).
\]
Using similar arguments as above, we may deduce that for every finite subset $S$ of $\mathbb{Z}$ and for every sequence of scalars $(\theta_\nu)_{\nu\in S}$ with $|\theta_\nu|=1$,
\begin{equation}\label{Cauchy-b}
\big\| \sum_{\nu \in S} \theta_\nu U\mathcal{D}_c(b_\nu) \big\|_{E(\M \overline{\otimes} B(\ell_2(\mathbb{N}^2))) } \leq \kappa_E\big\|x \big\|_{E(\M)}. 
\end{equation}
This again shows that  the series $\sum_{\nu \in \mathbb{Z} } U\mathcal{D}_c(b_\nu) $ is  weakly unconditionally Cauchy in $E(\M \overline{\otimes} B(\ell_2(\mathbb{N}^2)))$. The proof for the series $\sum_{\nu \in \mathbb{Z}} U\mathcal{D}_c(c_\nu^*)$ is identical so we omit the details. The inequality stated in  Sublemma~\ref{sublemma} follows from \eqref{Cauchy-a}, \eqref{Cauchy-b}, and the corresponding inequality for  $\sum_{\nu \in \mathbb{Z}} U\mathcal{D}_c(c_\nu^*)$.  Sublemma~\ref{sublemma} is verified.

\medskip

Next, we note that since $L_p(\M \overline{\otimes} \ell_\infty) + L_q(\M \overline{\otimes} \ell_\infty)$ is a reflexive space, the series  $\sum_{\nu \in \mathbb{Z}} \mathcal{D}_d(a_\nu)$ is unconditionally convergent in $L_p(\M \overline{\otimes} \ell_\infty) + L_q(\M \overline{\otimes} \ell_\infty)$. Similarly, both  series $\sum_{\nu \in \mathbb{Z}} U\mathcal{D}_c(b_\nu)$ and $\sum_{\nu \in \mathbb{Z}} U\mathcal{D}_c(c_\nu^*)$  are unconditionally convergent in
$L_p(\M \overline{\otimes} B(\ell_2(\mathbb{N}^2))) + L_q(\M \overline{\otimes} B(\ell_2(\mathbb{N}^2)))$.
Now we set:
\[
  \displaystyle{\alpha^{(\epsilon)} := \sum_{\nu \in \mathbb{Z}} \mathcal{D}_d(a_\nu}) \in L_p(\M \overline{\otimes} \ell_\infty) + L_q(\M \overline{\otimes} \ell_\infty),
\]    
\[
\displaystyle{\beta^{(\epsilon)} := \sum_{\nu \in \mathbb{Z}} U\mathcal{D}_c(b_\nu}) \in 
L_p(\M \overline{\otimes} B(\ell_2(\mathbb{N}^2))) + L_q(\M \overline{\otimes} B(\ell_2(\mathbb{N}^2))),
\]
and
\[ \displaystyle{\gamma^{(\epsilon)} := \sum_{\nu \in \mathbb{Z}} U\mathcal{D}_c(c_\nu^*)} \in L_p(\M \overline{\otimes} B(\ell_2(\mathbb{N}^2))) + L_q(\M \overline{\otimes} B(\ell_2(\mathbb{N}^2))).
\] 
We claim that
\begin{equation}\label{d-inequality}
\max\Big\{ \big\|  \alpha^{(\epsilon)} \big\|_{E(\M \overline{\otimes} \ell_\infty)},
\big\| \beta^{(\epsilon)}\big\|_{E(\M \overline{\otimes} B(\ell_2(\mathbb{N}^2)))}, \big\|\gamma^{(\epsilon)}\big\|_{E(\M \overline{\otimes} B(\ell_2(\mathbb{N}^2)))}\Big\}\leq \kappa_E\big\|x\big\|_{E(\M)}.
\end{equation} 
To verify this claim, we use the fact  mentioned in the previous section that for every semifinite von Neumann algebra $\N$, the inclusion map  from $ E(\N)$ into  $L_p(\N) + L_q(\N)$ is a semi-embedding. Indeed, if $\rho=\kappa_E\|x\|_{E(\M)}$, then from Sublemma~\ref{sublemma},  $(S_N(a))_{N\geq 1}$ is a sequence in the $\rho$-ball of 
 $E(\M \overline{\otimes} \ell_\infty)$  that converges to $\alpha^{(\epsilon)}$ for the norm topology of 
 $L_p(\M \overline{\otimes} \ell_\infty) +L_q(\M \overline{\otimes} \ell_\infty)$. By semi-embedding, we have  $\alpha^{(\epsilon)} \in E(\M \overline{\otimes} \ell_\infty)$ with 
 $\|\alpha^{(\epsilon)}\|_{E(\M \overline{\otimes} \ell_\infty)} \leq \rho$. Identical  arguments  can be applied to $\beta^{(\epsilon)}$ and $\gamma^{(\epsilon)}$ to deduce that 
 $\big\| \beta^{(\epsilon)}\big\|_{E(\M \overline{\otimes} B(\ell_2(\mathbb{N}^2)))}\leq \rho$ and
 $\big\| \gamma^{(\epsilon)}\big\|_{E(\M \overline{\otimes} B(\ell_2(\mathbb{N}^2)))} \leq \rho$. We have verified \eqref{d-inequality}.
 
 We observe that since the sequence $(S_N(a))_N$ is from $\mathcal{D}_d(\h_p^d(\M)) \cap \mathcal{D}_d(\h_q^d(\M))$, it follows that  $\alpha^{(\epsilon)} \in \mathcal{D}_d(\h_{L_p +L_q}^d(\M))$. That is, there exists  $a^{(\epsilon)} \in \h_{L_p +L_q}^d(\M)$ such that $\alpha^{(\epsilon)}=\mathcal{D}_d(a^{(\epsilon)})$. But since $ \big\|  \alpha^{(\epsilon)} \big\|_{E(\M \overline{\otimes} \ell_\infty)}\leq \kappa_E \big\|x\big\|_{E(\M)}$, by  the complementation stated in Proposition~\ref{comp-Int}, we conclude that  $a^{(\epsilon)} \in \h_E^d(\M)$ with the norm estimate:
 \[
 \big\| a^{(\epsilon)}\big\|_{h_E^d} \leq \kappa_E \big\|x\big\|_{E(\M)}.
 \]
 Identical arguments can be applied to the sequences $(S_N(b))_N$ and $(S_N(c))_N$ to  deduce that there exist martingales $b^{(\epsilon)} \in \h_E^c(\M)$ and $c^{(\epsilon)} \in \h_E^r(\M)$ such that $\beta^{(\epsilon)}=U\mathcal{D}_c(b^{(\epsilon)})$, $\gamma^{(\epsilon)}=U\mathcal{D}_c((c^{(\epsilon)})^*)$, and
 \[
 \max\Big\{\big\|b^{(\epsilon)}\big\|_{\h_E^c}, \big\|c^{(\epsilon)}\big\|_{\h_E^r}\Big\}\leq 
\kappa_E \big\|x\big\|_{E(\M)}. 
 \]
 It is clear from the construction that 
$x^{(\epsilon)}= a^{(\epsilon)} + b^{(\epsilon)} + c^{(\epsilon)}$ and 
the last two inequalities  clearly implies the last item in  Lemma~\ref{approximation}. The proof  is complete.
\end{proof}

The next step provides the desired  decomposition for all $x \in L_p(\M) \cap L_q(\M)$.
\begin{lemma}\label{approximation-2} There exists a constant $\beta_E$ such that  every 
 $ x \in  L_p(\M) \cap L_q(\M)$ admits  a decomposition $x=a+b+c$  where $a\in \h_E^d(\M)$, $b \in \h_E^c(\M)$, and $c \in \h_E^r(\M)$ satisfying:
\[
\big\|a\big\|_{\h_E^d } + \big\|b\big\|_{\h_E^c } + \big\|c\big\|_{\h_E^r } \leq \beta_E\big\|x\big\|_{E(\M)}.
\]
\end{lemma}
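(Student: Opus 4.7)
The plan is to bootstrap Lemma~\ref{approximation} into an exact decomposition by iteration, harnessing the continuous inclusion $L_p(\M)\cap L_q(\M) \hookrightarrow E(\M)$ recalled in Section~2.1. This inclusion converts small $L_p\cap L_q$-errors into small $E(\M)$-norm budgets, which then control the size of the next-stage decomposition through Lemma~\ref{approximation}.

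Concretely, I would set $x_0 := x$ and recursively, at step $k \geq 1$, choose $\epsilon_k > 0$ so that $C\epsilon_k \leq 2^{-k}\|x\|_{E(\M)}$, where $C$ denotes the norm of the inclusion $L_p(\M)\cap L_q(\M) \hookrightarrow E(\M)$. Apply Lemma~\ref{approximation} to $x_{k-1}$ with this $\epsilon_k$ to produce martingales $a_k \in \h_E^d(\M)$, $b_k \in \h_E^c(\M)$, $c_k \in \h_E^r(\M)$ and an element $y_k = a_k + b_k + c_k \in L_p(\M)\cap L_q(\M)$ satisfying $\|x_{k-1} - y_k\|_{L_p(\M)\cap L_q(\M)} < \epsilon_k$ and $\|a_k\|_{\h_E^d} + \|b_k\|_{\h_E^c} + \|c_k\|_{\h_E^r} \leq \eta_E \|x_{k-1}\|_{E(\M)}$; then set $x_k := x_{k-1} - y_k$, which lies in $L_p(\M)\cap L_q(\M)$ as a difference of two elements of that space. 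The first inequality then yields $\|x_k\|_{E(\M)} \leq C\|x_k\|_{L_p\cap L_q} \leq 2^{-k}\|x\|_{E(\M)}$, forcing the remainders to vanish in $E(\M)$-norm.

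Summing geometrically, the tail norms give
\[
\sum_{k\geq 1}\bigl(\|a_k\|_{\h_E^d} + \|b_k\|_{\h_E^c} + \|c_k\|_{\h_E^r}\bigr) \leq \eta_E \sum_{k\geq 0}\|x_k\|_{E(\M)} \leq 2\eta_E\|x\|_{E(\M)}.
\]
Since $\h_E^d(\M)$, $\h_E^c(\M)$, $\h_E^r(\M)$ are Banach spaces, this absolute convergence produces $a := \sum_k a_k \in \h_E^d(\M)$, $b := \sum_k b_k \in \h_E^c(\M)$, $c := \sum_k c_k \in \h_E^r(\M)$ with the summed-norm bound $2\eta_E\|x\|_{E(\M)}$.

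To close, I would check that $x = a + b + c$ in the sense of $E(\M)$. Using the bounded one-to-one inclusions $\xi_E^w: \h_E^w(\M)\to E(\M)$ from Part~I for $w\in\{d,c,r\}$, each $y_k$ equals $\xi_E^d(a_k)+\xi_E^c(b_k)+\xi_E^r(c_k)$ in $E(\M)$, so the telescoping identity $\sum_{k=1}^{N} y_k = x - x_N$ together with $\|x_N\|_{E(\M)}\to 0$ shows that the partial sums converge to $x$ in $E(\M)$, while the continuity of each $\xi_E^w$ gives simultaneous convergence to $\xi_E^d(a)+\xi_E^c(b)+\xi_E^r(c)$. Setting $\beta_E := 2\eta_E$ then yields the desired inequality. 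The only subtle point is reconciling convergence across four distinct norms (the three Hardy norms and $E(\M)$), which is precisely what the identification machinery from Part~I is built to handle; everything else is a routine iteration.
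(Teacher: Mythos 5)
Your iteration argument is correct, and it reaches the lemma by a genuinely different route than the paper. The paper keeps a single application of Lemma~\ref{approximation} per approximant: it builds a sequence $x^{(m)}\to x$ in $L_p(\M)\cap L_q(\M)$ whose decompositions $a^{(m)}+b^{(m)}+c^{(m)}$ all lie in a fixed ball of radius $\eta_E\|x\|_{E(\M)}$, then extracts weak limits of $\mathcal{D}_d(a^{(m)})$, $U\mathcal{D}_c(b^{(m)})$, $U\mathcal{D}_c((c^{(m)})^*)$ using reflexivity of $L_p+L_q$ over the ambient algebras, and finally invokes the semi-embedding of $E(\mathcal{N})$ into $L_p(\mathcal{N})+L_q(\mathcal{N})$ (a consequence of the Fatou property) to keep the limits inside the same ball, with $\beta_E=3\eta_E$. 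You instead run the classical almost-openness-implies-openness iteration: the continuous inclusion $L_p(\M)\cap L_q(\M)\hookrightarrow E(\M)$ converts the $L_p\cap L_q$ error of each stage into a geometrically decaying $E(\M)$ budget for the next, and completeness of $\h_E^d(\M)$, $\h_E^c(\M)$, $\h_E^r(\M)$ plus continuity of the maps $\xi_E^w$ closes the telescoping identity, yielding $\beta_E=2\eta_E$. Your version is more elementary and constructive --- it needs neither reflexivity of the sum spaces nor the semi-embedding property at this stage (though the paper still uses semi-embedding inside Lemma~\ref{approximation} itself, so nothing is globally saved) --- while the paper's version reuses verbatim the weak-limit/semi-embedding machinery it has already set up, which is also the pattern it follows in Case~2 of Theorem~\ref{simultaneous}. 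Both are complete proofs; the only point worth making explicit in yours is that the equality $y_k=\xi_E^d(a_k)+\xi_E^c(b_k)+\xi_E^r(c_k)$ holds in $E(\M)$ by the identification set out after Part~I, which you do note.
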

\begin{proof}
We use   semi-embedding techniques. Using Lemma~\ref{approximation}, we construct  sequence  of operators $(x^{(m)})_{m\geq 1}\subset L_p(\M) \cap L_q(\M)$, sequences  $(a^{(m)} )_{m\geq 1}\subset \h_E^d(\M)$, $(b^{(m)})_{m\geq 1} \subset \h_E^c(\M)$, and $(c^{(m)})_{m\geq 1}\subset \h_E^r(\M)$ such that:   
\begin{itemize}
\item[(i)] $\lim_{m\to \infty} \|x^{(m)}-x\|_{L_p(\M) \cap L_q(\M)}=0$;
\item[(ii)] $x^{(m)} =a^{(m)} + b^{(m)} +c^{(m)}$  for all $m\geq 1$:
\item[(iii)] $\big\|a^{(m)}\big\|_{\h_E^d } + \big\|b^{(m)}\big\|_{\h_E^c } + \big\|c^{(m)}\big\|_{\h_E^r } \leq \eta_E\big\|x\big\|_{E(\M)}$.
\end{itemize}
Let $\rho=\eta_E \|x\|_{E(\M)}$.  Then
 the sequence   $\{\mathcal{D}_d(a^{(m)})\}_{m\geq 1}$ belongs to the  $\rho$-ball of $E(\M \overline{\ot} \ell_\infty)$.  Similarly, $\{U\mathcal{D}_c(b^{(m)})\}_{m\geq 1}$ and $\{U\mathcal{D}_c((c^{(m)})^*)\}_{m\geq 1}$  belong to the $\rho$-ball of $E(\M\overline{\otimes}B(\ell_2(\mathbb{N}^2)))$.

Since    the spaces $L_p(\M \overline{\otimes} \ell_\infty) +L_q(\M \overline{\otimes} \ell_\infty)$  and $L_p(\M\overline{\otimes}B(\ell_2(\mathbb{N}^2)))+ L_q(\M\overline{\otimes}B(\ell_2(\mathbb{N}^2)))$ are  reflexive, we  may assume (after taking subsequences if necessary) that $\{\mathcal{D}_d(a^{(m)})\}_{m\geq 1}$ converges  to $\widetilde{a}$ for weak topology  in $L_p(\M \overline{\otimes} \ell_\infty) +L_q(\M \overline{\otimes} \ell_\infty)$ and both $\{U\mathcal{D}_c(b^{(m)})\}_{m\geq 1}$ and $\{U\mathcal{D}_c((c^{(m)})^*)\}_{m\geq 1}$ converge  (for the weak topology of  $L_p(\M\overline{\otimes}B(\ell_2(\mathbb{N}^2)))+ L_q(\M\overline{\otimes}B(\ell_2(\mathbb{N}^2)))$) to $\widetilde{b}$ and $\widetilde{c}^*$, respectively. 

 As a consequence of the fact that  the inclusion mappings  are semi-embedings, it is clear  that these limits satisfy:
    \[
  \max\Big\{\|\widetilde{a}\|_{E(\M \overline{\otimes} \ell_\infty)}, \|\widetilde{b}\|_{E(\M\overline{\otimes}B(\ell_2(\mathbb{N}^2)))}, \|\widetilde{c}^*\|_{E(\M\overline{\otimes}B(\ell_2(\mathbb{N}^2)))} \Big\} \leq \rho.
  \]
 Using  similar arguments as  in the proof of the previous lemma,  we  may deduce that there exist  $a \in \h_E^d(\M)$, $b \in \h_E^c(\M)$, and $c\in \h_E^r(\M)$ such that  $\widetilde{a}=\mathcal{D}_d(a)$, $\widetilde{b}=U\mathcal{D}_c(b)$, $\widetilde{c}^*=U\mathcal{D}_c(c^*)$, and the previous inequality translates into
 \[
\max\Big\{\|a\|_{\h_E^d}, \|b\|_{\h_E^c},  \|c\|_{\h_E^r}  \Big\} \leq \rho.
\]
Moreover, it is clear  from $\rm{(i)}$ and $\rm{(ii)}$ that $x= a +b +c$. Thus, we have verified Lemma~\ref{approximation-2}.
\end{proof}

To conclude the proof of Part~II,  it is enough to note that  since  $L_p(\M) \cap L_q(\M)$ is dense in $E(\M)$. The assertion that $\big\|x\big\|_{\h_E} \leq \beta_E \big\|x\big\|_{E(\M)}$ for all $x \in E(\M)$  then follows immediately from Lemma~\ref{approximation-2}.

\subsection{The case ${ 2<p_E \leq q_E<\infty}$}
Assume now that $E$ is a  symmetric Banach function space on $(0,\infty)$  satisfying  the Fatou property and $2<p_E \leq q_E <\infty$.

\medskip

\noindent{\bf Part~III.}
We will verify that for every $x\in \h_E(\M)$, 
 \[
\big\|x\|_{E(\M)} \lesssim \big\| x \big\|_{\h_E}.\]
This will be deduced from   Part~II using   duality. Let $E^{\times}$ be the K\"othe dual of $E$.   The noncommutative symmetric space  $E^{\times}(\M)$ is the K\"othe dual of $E(\M)$ in the sense of \cite{DDP3}. Since $E$ has the Fatou property, it follows that  for every $x \in E(\M)$, we have $\|x\|_{E(\M)}= \|x\|_{E^{\times\times}(\M)}$. In particular, the closed unit ball of $E^{\times}(\M)$ is  a norming set for  $E(\M)$.

 It is enough to verify the inequality for   $x \in L_1(\M) \cap \M$. For $\epsilon>0$, choose $y \in E^{\times}(\M)$, with $\|y\|_{E^{\times}(\M)} =1$, and  such that
\[
(1-\epsilon)\big\| x \big\|_{E(\M)} \leq   \T(xy^*).
\] 
From  \cite[Proposition~2.b.2]{LT},  the Boyd indices of $E^\times$ satisfy $1<p_{E^{\times}} \leq  q_{E^{\times}}<2$. Thus, using  Part~II, it follows that $y\in \h_{E^\times}$. We may choose  a decomposition $y= a + b + c$  satisfying:
\[
\big\|a \big\|_{\h_{E^{\times}}^d}  + \big\|b \big\|_{\h_{E^{\times}}^c} +\big\|c \big\|_{\h_{E^{\times}}^r}\leq \kappa_{E^{\times}} +\epsilon.
\]
From  the discussion after the proof of  Part~I, $a$, $b$, and $c$ can be represented by operators from $E^\times(\M)$, which we will still denote by $a$, $b$, and $c$. By density, there exist $N\geq 1$, $\hat{a}$, $\hat{b}$, and $\hat{c}$ in
 $ L_1(\M_N) \cap \M_N$ so that 
 \[
 \big\| a-\hat{a}\big\|_{h_{E^\times}^d} +\big\| a-\hat{a}\big\|_{E^\times(\M)} 
 <\epsilon/3, \]
 \[  \big\| b-\hat{b}\big\|_{h_{E^\times}^c} +\big\| b-\hat{b}\big\|_{E^\times(\M)} 
 <\epsilon/3, \]
  \[ \big\| c-\hat{c}\big\|_{h_{E^\times}^r} +\big\| c-\hat{c}\big\|_{E^\times(\M)} 
 <\epsilon/3.
 \]
Now,  $\T(xy^*)= \T(xa^*) + \T(xb^*) + \T(xc^*)\leq \epsilon \big\|x\big\|_{E(\M)} + |\T(x\hat{a}^*)| + |\T(x\hat{b}^*)| + |\T(x\hat{c}^*)| :=\epsilon\big\|x\big\|_{E(\M)} +{ I} + { II} + {II}$. We estimate ${ I}$, ${ II}$, and ${III}$ separately. Below, we   denote by $\gamma$ and $\tr$  the usual traces on $\ell_\infty$ and $B(\ell_2(\mathbb{N}^2))$, respectively. For ${ I}$, we have the following estimates:
\begin{align*}
{ I} &=\Big| \sum_{n=1}^N \T(dx_n d\hat{a}_n^*)\Big|\\
& =\Big|\T \otimes \gamma \big( (dx_n)_{1\leq n \leq N} . (d\hat{a}_n^*)_{1\leq n\leq N} \big)\Big|\\
&\leq \big\| (dx_n)_{1\leq n\leq N} \big\|_{E(\M \overline{\otimes} \ell_\infty)} . \big\| (d\hat{a}_n)_{1\leq n\leq N} \big\|_{E^{\times}(\M \overline{\otimes} \ell_\infty)}\\
&= \big\|x \big\|_{\h_E^d} . \big\|\hat{a} \big\|_{\h_{E^{\times}}^d}\\
&\leq \big(\epsilon/3 +\big\|a\big\|_{\h_{E^{\times}}^d} \big) \big\|x \big\|_{\h_E^d}.
\end{align*}
 For $ II$, we use the identifications  of $\h_E^c(\M)$  and  $\h_{E^{\times}}^c(\M)$ as  subspaces of $E(\M \overline{\otimes}B(\ell_2(\mathbb{N}^2)))$ and  $E^{\times}(\M \overline{\otimes}B(\ell_2(\mathbb{N}^2)))$, respectively. First, we write ${ II}=\big|\sum_{n=1}^N \T(dx_n d\hat{b}_n^*)\big|$. Since the conditional expectations $\E_k$'s are trace invariant,  we have:
\begin{align*}
 { II} &=\Big|\sum_{n= 1}^N \T(\E_{n-1}(d\hat{b}_n^* dx_n))\Big|\\
&=\Big|\T\Big[\sum_{n= 1}^N \E_{n-1}(d\hat{b}_n^* dx_n)\Big]\Big|.
\end{align*}
We note that $(d\hat{b}_n)_{1\leq n\leq N}$ and $( dx_n)_{1\leq n \leq N}$ are sequences  from $\mathcal{F}$ and therefore  \eqref{key-identity} applies. We may then write
\begin{align*}
II &=\Big|\T \otimes \tr \Big[ U( (d\hat{b}_n)_{1\leq n\leq N})^* U((dx_n)_{1\leq n\leq N}) \Big] \Big|\\
&\leq \Big\| U( (d\hat{b}_n)_{n\geq 1})\Big\|_{E^{\times}(\M \overline{\otimes}B(\ell_2(\mathbb{N}^2)))}
\Big\| U((dx_n)_{n\geq 1}) \Big\|_{E(\M \overline{\otimes}B(\ell_2(\mathbb{N}^2)))}\\
&=\big\| \hat{b} \big\|_{\h_{E^{\times}}^c} . \big\|x \big\|_{\h_{E}^c} \\
&\leq (\epsilon/3 + \big\| b \big\|_{\h_{E^{\times}}^c} ) \big\|x \big\|_{\h_{E}^c}. 
\end{align*}
The proof that $ III \leq  \big(\epsilon/3 +\big\|c \big\|_{\h_{E^{\times}}^r} \big) \big\|x \big\|_{\h_{E}^r}$ is identical so we omit the details.  Combining the above estimates on ${ I}$, ${ II}$, and ${ III}$, we  derive  that 
\[
(1-2\epsilon)\big\|x \big\|_{E(\M)} \leq  (\kappa_{E^{\times}} +2\epsilon) \big\|x \big\|_{\h_E}.
\]
Taking infimum over $\epsilon$  gives the desired inequality.

\bigskip

\noindent{\bf Part~IV.} The remaining case is  to show the reverse inequality $\big\|x\big\|_{\h_E} \lesssim \big\|x\big\|_{E(\M)}$. 
This is an easy  application  of the noncommutative Burkholder inequalities  for  $L_s$-bounded martingales  when $2<s<\infty$ together with  Proposition~\ref{comp-Int}. Details are left to the reader.
\qed

\begin{remark} Our duality argument in Part~III  strongly relies on  the theory of K\"othe dualities for noncommutative symmetric space from \cite{DDP3}. At the time of this writing, we do not know how to incorporate this theory into  martingales Hardy spaces.
\end{remark}

\bigskip

For the case where $\M$ is a finite von Neumann algebra equipped with a normal tracial state $\T$, it is more natural to consider symmetric Banach function spaces defined on the interval $[0,1]$. However, the definition of the diagonal Hardy space uses the infinite von Neumann algebra $\M \overline{\otimes} \ell_\infty$. In this case, we may  consider an  extension  of  symmetric Banach function space $E$ on $[0,1]$  into a symmetric Banach function space on $(0,\infty)$  introduced in \cite{JO-MA-SC-TZ} (see also  \cite{Asta-Sukochev-Wong, LT} for more details).

Let $Z_{E}^2$ be the symmetric space on $(0,\infty)$ of all measurable functions $f$ for which $\mu(f)\ch_{(0,1]} \in E$ and $\mu(f)\ch_{(1,\infty)} \in L_2(0,\infty)$, endowed with the norm 
\[
\big\|f\big\|_{Z_{E}^2} =\max\Big\{ \big\|\mu(f)\ch_{(0,1]}\big\|_E, \Big(\sum_{n=0}^\infty \Big( \int_n^{n+1} \mu_u(f)\ du \Big)^2 \Big)^{1/2} \Big\}.
\]
It was shown in \cite[Theorem~2.f.1]{LT} that if $E$ has nontrivial Boyd indices then $Z_{E}^2$ is isomorphic to $E$. Using the symmetric  Banach function space $Z_{E}^2$ on the diagonal Hardy space, we may state the following variant of Theorem~\ref{main}: 

\begin{theorem}\label{main-finite} Assume that $(\M,\T)$ is a finite von Neumann algebra with $\T$  being a normal tracial  state and $E$ is a symmetric Banach function space on $[0,1]$ satisfying  the Fatou property.  Let $x=(x_n)_{n\geq 1}$ be a  bounded $E(\M)$-martingale. \begin{enumerate}
\item If $1<p_E \leq q_E <2$, then
\begin{equation*}
\big\|x\big\|_{E(\M)} \simeq_E 
\inf\Big\{ \big\| w \big\|_{\h_{Z_{E}^2}^d}  + \big\|  y\big\|_{\h_E^c} + \big\| z\big\|_{\h_E^r}  \Big\}
\end{equation*}
where the infimum runs over all decompositions $x=w +y +z$ with $w$, $y$, and $z$ are martingales.

\item If $2<p_E \leq  q_E <\infty$, then 
\begin{equation*}
\big\|x\big\|_{E(\M)} \simeq_E 
\max\Big\{ \big\| x \big\|_{\h_{Z_{E}^2}^d}  , \big\|  x\big\|_{\h_E^c} , \big\| x\big\|_{\h_E^r}  \Big\}.
\end{equation*}
\end{enumerate}
\end{theorem}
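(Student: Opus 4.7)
The plan is to reduce Theorem~\ref{main-finite} to Theorem~\ref{main}, applied to the symmetric Banach function space $Z_{E}^{2}$ on $(0,\infty)$ (which, as defined in the excerpt, extends $E$ from $[0,1]$ by an $L_{2}$-tail at infinity). The reduction proceeds in three steps.

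\emph{Step 1: Collapse of $Z_E^2$ on the finite algebra.} First I would show that $\|x\|_{E(\M)}\simeq_E\|x\|_{Z_{E}^{2}(\M)}$ for every $x\in\widetilde{\M}$. Since $\T$ is a normal tracial state, $\mu_u(x)=0$ for $u>1=\T(\mathbf{1})$, so in the definition of $\|\cdot\|_{Z_{E}^{2}}$ the entire $\ell_{2}$-sum at infinity reduces to the single term $(\int_{0}^{1}\mu_u(x)\,du)^{2}=\|x\|_{1}^{2}$. Continuity of the inclusion $E\hookrightarrow L_{1}[0,1]$ then gives $\|x\|_{1}\lesssim_E\|x\|_{E(\M)}$, so the two norms on $\widetilde{\M}$ are equivalent. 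In particular, for any $a\in\M$ one has $\|a\|_{Z_{E}^{2}(\M)}\simeq_E\|a\|_{E(\M)}$; applied to $a=s_c(y)$ and $a=s_r(z)$, this yields $\h_{Z_{E}^{2}}^{c}(\M)=\h_{E}^{c}(\M)$ and $\h_{Z_{E}^{2}}^{r}(\M)=\h_{E}^{r}(\M)$ with equivalent norms. The diagonal Hardy space is a different matter: the sequence $(dx_n)$ sits in the semifinite algebra $\M\overline{\otimes}\ell_{\infty}$, whose singular-value function is not compactly supported, and $\|w\|_{\h_{Z_{E}^{2}}^{d}}$ is genuinely sensitive to the $L_{2}$-tail of $Z_E^2$. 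This explains why the statement keeps $Z_{E}^{2}$ in the diagonal term.

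\emph{Step 2: Transfer of hypotheses.} Next I would verify that $Z_{E}^{2}$ satisfies the hypotheses of Theorem~\ref{main}: the Fatou property, which is inherited routinely from $E$ and $L_{2}$, together with non-trivial Boyd indices lying strictly below $2$ in case~(1) or strictly above $2$ in case~(2). For the Boyd indices I would invoke the results of Lindenstrauss-Tzafriri and Astashkin-Sukochev-Wong on the Boyd indices of Calder\'on-type spaces $Z_E^p$, whose upshot is that when $E$ has non-trivial Boyd indices the space $Z_E^2$ inherits indices in the same strict range. If the borderline cannot be avoided with the exponent $2$, one works instead with $Z_E^q$ for some $q\in(q_E,2)$ in case~(1) or $q\in(2,p_E)$ in case~(2), applies Theorem~\ref{main} to $Z_E^q$, and then shows that for the martingale difference sequence $(dw_n)$ of an $E(\M)$-martingale the norms $\|(dw_n)\|_{Z_E^q(\M\overline{\otimes}\ell_\infty)}$ and $\|(dw_n)\|_{Z_E^2(\M\overline{\otimes}\ell_\infty)}$ agree up to constants depending only on $E$, using the built-in $L_2$-boundedness enforced by the $E(\M)$-martingale structure.

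\emph{Step 3: Application and translation.} Finally, applying Theorem~\ref{main} to $Z_{E}^{2}$ and the (semifinite) algebra $(\M,\T)$ gives $\|x\|_{Z_{E}^{2}(\M)}\simeq_E\|x\|_{\h_{Z_{E}^{2}}(\M)}$, with the three-term infimum in case~(1) and maximum in case~(2). By Step~1 the left-hand side is equivalent to $\|x\|_{E(\M)}$, the column and row components of $\h_{Z_{E}^{2}}(\M)$ coincide with $\h_{E}^{c}(\M)$ and $\h_{E}^{r}(\M)$, and the diagonal component is literally $\h_{Z_{E}^{2}}^{d}(\M)$. This yields the desired equivalences.

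The hard part will be Step~2: identifying the Boyd indices of $Z_{E}^{2}$ explicitly and, if they fall on the boundary value $2$, carrying out the $Z_E^q$-to-$Z_E^2$ comparison on diagonal martingale sequences in $\M\overline{\otimes}\ell_\infty$ via the square-function bounds already established for noncommutative martingales.
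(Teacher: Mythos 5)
Your overall plan --- reduce to Theorem~\ref{main} applied to an extension of $E$ to $(0,\infty)$, after observing that on the finite algebra everything except the diagonal term collapses back to $E$ --- is exactly the reduction the paper has in mind (it offers no proof beyond asserting the variant), and your Step 1 is correct: since $\mu(a)$ is supported on $(0,\T(\mathbf{1})]=(0,1]$ for every $a\in\widetilde{\M}$, one has $\|a\|_{Z_E^2(\M)}=\max\{\|a\|_{E(\M)},\|a\|_1\}\simeq_E\|a\|_{E(\M)}$, which identifies $E(\M)$ with $Z_E^2(\M)$ and $\h^{c}_{Z_E^2}(\M)$, $\h^{r}_{Z_E^2}(\M)$ with $\h^c_E(\M)$, $\h^r_E(\M)$. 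The problem is Step 2. The Boyd indices of $Z_E^2$ are $p_{Z_E^2}=\min\{p_E,2\}$ and $q_{Z_E^2}=\max\{q_E,2\}$ (already for $E=L_r[0,1]$ with $r<2$ one has $Z_E^2=L_r+L_2$ up to equivalent norm, whose indices are $r$ and $2$; in general the dilations act on $\chi_{(0,a)}$, $a\to\infty$, with norm growth $a^{1/2}$). So in case (1) one always has $q_{Z_E^2}=2$ and in case (2) always $p_{Z_E^2}=2$: the borderline is not a contingency to hedge against, it occurs for \emph{every} admissible $E$, and Theorem~\ref{main} never applies to $Z_E^2$ itself. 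Your primary claim that $Z_E^2$ "inherits indices in the same strict range" is false, and the fallback via $Z_E^q$ has to carry the whole argument.

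That fallback, as described, proves only one of the two inequalities in each case. Take case (1) and $q\in(q_E,2)$. Applying Theorem~\ref{main} to $Z_E^q$ and then using the contractive inclusion of the $\ell_q$-tail into the $\ell_2$-tail (so $\|w\|_{\h^d_{Z_E^2}}\leq\|w\|_{\h^d_{Z_E^q}}$) does yield $\inf\{\cdots\}\lesssim_E\|x\|_{E(\M)}$, because there you choose the decomposition. But in the reverse inequality the decomposition $x=w+y+z$ is arbitrary, and the two-sided equivalence $\|(dw_n)\|_{Z_E^q(\M\overline{\otimes}\ell_\infty)}\simeq\|(dw_n)\|_{Z_E^2(\M\overline{\otimes}\ell_\infty)}$ you invoke is false: a summand $w$ with $(dw_n)\in Z_E^2(\M\overline{\otimes}\ell_\infty)$ carries no "built-in $L_2$-boundedness" (only $x$ does), and an $\ell_2$-tail does not dominate an $\ell_q$-tail when $q<2$. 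The correct repair is the pair of one-sided endpoint estimates the paper records right after the theorem: $Z_E^2\in{\rm Int}(L_1,L_2)$, and the Part~I interpolation argument runs with the couple $(L_1,L_2)$ because $\h_1^{d}(\M)\subseteq L_1(\M)$ and $\h_2(\M)=L_2(\M)$, giving $\|x\|_{E(\M)}\simeq\|x\|_{Z_E^2(\M)}\lesssim\|w\|_{\h^d_{Z_E^2}}+\|y\|_{\h^c_{E}}+\|z\|_{\h^r_{E}}$ directly, with no detour through $Z_E^q$. Symmetrically, in case (2) the bound $\|x\|_{\h^d_{Z_E^2}}\lesssim_E\|x\|_{E(\M)}$ does not follow from the $Z_E^q$ estimate (now the inclusion of tails points the wrong way); there one either uses $Z_E^2\in{\rm Int}(L_2,L_q)$ with the Part~IV argument, or notes that by Cauchy--Schwarz on each interval $[n,n+1]$ the $\ell_2$-tail of $\mu((dx_n))$ is dominated by $\|(dx_n)\|_{L_2(\M\overline{\otimes}\ell_\infty)}=\|x\|_2\lesssim_E\|x\|_{E(\M)}$. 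With these two endpoint arguments supplied, your scheme closes; without them it leaves one inequality in each case unproved.
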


The assumptions of Theorem~\ref{main}  and Theorem~\ref{main-finite} are equivalent to $E\in {\rm Int}(L_p, L_q)$  with $1<p <q<2$ or $2<p<q<\infty$. Indeed, if $E\in {\rm Int}(L_p, L_q)$ then $p\leq p_E \leq q_E \leq q$. We do not know if our results extend to  the case where $E\in {\rm Int}(L_p, L_2)$ for $1<p<2$ or $E\in {\rm Int}(L_2, L_q)$ for $q>2$. On the other hand, since $\h_1(\M) \subset L_1(\M)$, the argument used in Part~I of the proof of Theorem~\ref{main} can be readily  adjusted to provide  the following:

 If $E\in {\rm Int}(L_1, L_q)$ where $q\leq 2$, then there exists a constant  $c_E$ such that
for every martingale $x \in \h_E(\M)$, 
\[
\big\| x\big\|_{E(\M)} \leq c_E \big\|x \big\|_{\h_E}.
\]
Similarly,   
if $E\in {\rm Int}(L_2, L_q)$ where $2<q<\infty$, then there exists a constant  $c_E$ such that
for every  $y \in E(\M)$, 
\[
\big\| y\big\|_{\h_E} \leq c_E \big\|y \big\|_{E(\M)}.
\]

\medskip

We conclude this section by pointing out  that as with the case of  noncommutative Burkholder-Gundy inequalities, no  equivalence  of norms  is known for the case where $1<p_E <2$ and $2<q_E <\infty$.  

\section{Further remarks}


We begin this  section with a short discussion about    the comparison between martingales Hardy spaces and conditioned martingale Hardy spaces associated with general symmetric spaces. Combining  Theorem~\ref{main} with  the main result of \cite{Jiao2}, we may state:

\begin{corollary}\label{davis}
 Let $E$ be a symmetric Banach function  space on $(0,\infty)$ satisfying the Fatou property. Assume that  either $1<p_E \leq q_E < 2$ or $2<p_E \leq q_E<\infty$. Then 
\[
\H_E(\M)= \h_E(\M)=E(\M), 
\]
with equivalent norms.
\end{corollary}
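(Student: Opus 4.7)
The plan is to observe that the statement is a direct chaining of two results and therefore requires essentially no new argument. Under the hypotheses of the corollary, Theorem~\ref{main} (just established) provides the equivalence $\h_E(\M) = E(\M)$ with equivalent norms. On the other hand, the noncommutative Burkholder--Gundy inequalities in symmetric spaces obtained in \cite{Jiao2} (alluded to in the introduction) give $\H_E(\M) = E(\M)$, again with equivalent norms, under precisely the same hypotheses on $E$: namely that $E$ is a symmetric Banach function space on $(0,\infty)$ with the Fatou property and with nontrivial Boyd indices lying strictly between $1$ and $2$ or strictly between $2$ and $\infty$.

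The only thing to check is the compatibility of the hypotheses, which is immediate. Chaining the two norm equivalences
\[
\|x\|_{\H_E} \simeq_E \|x\|_{E(\M)} \simeq_E \|x\|_{\h_E}
\]
for any bounded $E(\M)$-martingale $x=(x_n)_{n\geq 1}$ yields the stated identifications $\H_E(\M) = \h_E(\M) = E(\M)$ with equivalent norms.

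I do not expect any real obstacle, as all the heavy lifting has already been done: the nontrivial direction of $\h_E(\M) = E(\M)$ is Part~II of the proof of Theorem~\ref{main}, which relies on the simultaneous decomposition of Theorem~\ref{simultaneous} together with the $J$-method interpolation description in Corollary~\ref{lifted-interpolation}, while the equivalence $\H_E(\M) = E(\M)$ is supplied by \cite{Jiao2}. The corollary is therefore purely a statement of record, collecting the two martingale characterizations of $E(\M)$ (via square functions and via conditioned square functions together with the diagonal term) into a single assertion.
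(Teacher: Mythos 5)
Your proposal is correct and is exactly the paper's argument: the corollary is stated as an immediate combination of Theorem~\ref{main} (giving $\h_E(\M)=E(\M)$) with the Burkholder--Gundy equivalence $\H_E(\M)=E(\M)$ from \cite{Jiao2}, chained through the common space $E(\M)$. Nothing further is needed.
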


We recall  the noncommutative Davis' decomposition established in \cite{Junge-Mei,Perrin}  which states that 
$\H_1(\M)=\h_1(\M)$. In view of  this equivalence, it seems reasonable to expect  that  the assumption $p_E>1$ is not needed in  the first equality in  Corollary~\ref{davis}. Unfortunately, our interpolation techniques are not efficient enough to apply to the case $p_E=1$. We leave this as an open problem.
\begin{problem}  
Assume that $p_E=1$ and $q_E \leq 2$. Do we have $\H_E(\M)=\h_E(\M)$?
\end{problem}


Our  next result  shows a connection between the diagonal Hardy space and the row/column Hardy spaces.

\begin{proposition}\label{comparison} If  $q_E <2$, then 
$
\max\big\{ \big\|a\big\|_{\H_E^c}, \big\|a\big\|_{\H_E^r} \big\}\leq  c_E\big\|a \big\|_{\h_E^d}$.
\end{proposition}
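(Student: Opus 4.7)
The plan is to realize the two inequalities as the boundedness of a single concrete linear map between noncommutative symmetric spaces and deduce that boundedness by interpolating a simple $L_r$-estimate valid for all $r\le 2$.

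Concretely, I would introduce the linear map $T$ sending the diagonal $\sum_n a_n\otimes e_{n,n}$ in $\M\overline{\otimes}\ell_\infty$ to the column $\sum_n a_n\otimes e_{n,1}$ in $\M\overline{\otimes} B(\ell_2(\mathbb N))$, viewed initially on $L_1+L_2$. The first step is to show $T\colon L_r(\M\overline{\otimes}\ell_\infty)\to L_r(\M\overline{\otimes} B(\ell_2(\mathbb N)))$ is contractive for every $r\in[1,2]$. Since $|T((a_n))|^2=\sum_n a_n^*a_n\otimes e_{1,1}$, applying the Rotfel'd/Ando inequality $\tau\bigl((\sum_n x_n)^s\bigr)\le \sum_n\tau(x_n^s)$ for positive $x_n$ and $0<s\le 1$ with $s=r/2$ and $x_n=a_n^*a_n$ gives
\[
\|T((a_n))\|_r^r=\tau\Bigl(\bigl(\textstyle\sum_n a_n^*a_n\bigr)^{r/2}\Bigr)\le \sum_n\tau(|a_n|^r)=\|(a_n)\|_{L_r(\M\overline{\otimes}\ell_\infty)}^r.
\]

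Next, since $q_E<2$, I would pick $r_0\in(q_E,2)$ for which $E\in{\rm Int}(L_1,L_{r_0})$. Proposition~\ref{Operator-interpolation} applied to $T$ then yields a constant $c_E$ such that $T$ extends to a bounded map $E(\M\overline{\otimes}\ell_\infty)\to E(\M\overline{\otimes} B(\ell_2(\mathbb N)))$. Applying this to the martingale-difference sequence $(da_n)$ of $a\in\h_E^d(\M)$ gives
\[
\|a\|_{\H_E^c}=\Bigl\|\bigl(\textstyle\sum_n|da_n|^2\bigr)^{1/2}\Bigr\|_{E(\M)}=\|T((da_n))\|_{E(\M\overline{\otimes} B(\ell_2))}\le c_E\|(da_n)\|_{E(\M\overline{\otimes}\ell_\infty)}=c_E\|a\|_{\h_E^d}.
\]
For the row version, I would run the same argument with the companion map $(a_n)\mapsto\sum_n a_n\otimes e_{1,n}$, using $\sum_n a_na_n^*$ in place of $\sum_n a_n^*a_n$ (equivalently, invoke the column case on the adjoint martingale, since both $\h_E^d$ and $\H_E^r$ are invariant under $a\mapsto a^*$).

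The only delicate point is justifying $E\in{\rm Int}(L_1,L_{r_0})$ from $q_E<2$ alone. Under the additional (mild) hypothesis $p_E>1$ this is an immediate application of Boyd's theorem. In the borderline case $p_E=1$, one can either observe that $T$ is contractive on all of $L_r$ for $r\in[1,2]$ and use the semi-embedding/approximation argument in the style of Lemmas~\ref{approximation} and \ref{approximation-2} to transfer the $L_r$-inequality to $E$, or appeal to the standard variant of the Boyd interpolation principle that allows a strong-type $(1,1)$ endpoint. This is the main place where the hypothesis $q_E<2$ is genuinely used, and it is the only step that requires more than bookkeeping.
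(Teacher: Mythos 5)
Your proposal is correct and follows essentially the same route as the paper: the $L_r$-contraction for $1\le r<2$ via the $r/2$-triangle inequality for positive operators, followed by interpolation against the strong $L_1$ endpoint, which the paper justifies by citing the Astashkin--Maligranda result that $q_E<r_0<2$ implies $E\in{\rm Int}(L_1,L_{r_0})$ (so the borderline case $p_E=1$ you flag is covered and no extra hypothesis is needed). The only cosmetic difference is that you interpolate the globally defined map $T$ on $E(\M\overline{\otimes}\ell_\infty)$ via Proposition~\ref{Operator-interpolation}, whereas the paper interpolates the composite $\jmath\iota$ defined on $\h_s^d(\M)$ using Proposition~\ref{comp-Int}; both devices serve the same purpose of landing in the ambient space $E(\M\overline{\otimes}B(\ell_2(\mathbb{N})))$ so as to avoid the unknown complementation and interpolation properties of $\H_1^c(\M)$.
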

\begin{proof} Let $1\leq s<2$.
It  is an immediate consequence of the space $L_{s/2}(\M)$ being a $s/2$-normed space   that if $a \in  \h_s^d(\M)$ then $\max\big\{ \big\|a\big\|_{\H_s^c}, \big\|a\big\|_{\H_s^r} \big\}\leq \big\|a \big\|_{\h_s^d} $.   
 The general case can be achieved by interpolations. We  appeal to a result from \cite[Theorem~2 and Remark~4]{Asta-Malig2}  which asserts that if $q_E<q<2$, then $E \in {\rm Int}(L_1, L_q)$. 
 
  When $1\leq s<2$,  it follows from above that the formal identity
 $\iota: \h_s^d(\M) \to \H_s^c(\M)$ is a contraction. If  we denote by $\jmath$ the natural isometry of $\H_s^c(\M)$ into $L_s(\M \overline{\otimes} B(\ell_2(\mathbb{N})))$ then 
 $\jmath  \iota: \h_s^d(\M) \to L_s(\M \overline{\otimes} B(\ell_2(\mathbb{N})))$ is a contraction. We can now deduce from Proposition~\ref{comp-Int} that $\jmath \iota: \h_E^d(\M) \to E(\M \overline{\otimes} B(\ell_2(\mathbb{N})))$ is bounded whose range sits in  $\H_E^c(\M)$. This shows that $\big\|a \big\|_{\H_E^c} \lesssim \big\|a\big\|_{\h_E^d}$. We should point out  here that in general, $\H_1^c(\M)$ is not necessarily complemented in $L_1(\M;\ell_2^c)$ and we  do not know if $\H_E^c(\M) \in {\rm Int}(\H_1^c(\M), \H_q^c(\M))$.  This explains the role of $\jmath$ in our argument. 
\end{proof}
Since for $1\leq s<2$, we have  $\big\|a\big\|_{\H_s^c} \leq 2^{1/s} \big\|a \big\|_{\h_s^c}$  for all $a\in \h_s^c(\M)$ 
(\cite[Theorem 7.1]{JX}), it is reasonable to assume that  the corresponding statements to the row/column are also valid. That is, $\big\|a\big\|_{\H_E^c} \leq c_E \big\|a \big\|_{\h_E^c}$
and  $\big\|a\big\|_{\H_E^r} \leq  c_E \big\|a \big\|_{\h_E^r}$ for some constant $c_E$.
But we were unable to verify  these inequalities at this time.

\medskip

 The following consequence of  Theorem~\ref{simultaneous}  now follows from Proposition~\ref{comparison} and the facts that $\h_p^c(\M) \subset \H_p^c(\M)$ and  $\h_p^r(\M) \subset \H_p^r(\M)$ for $1\leq p<2$. It provides simultaneous decompositions for   martingale Hardy spaces norms that are related to the noncommutative Burkholder-Gundy inequalities from \cite{PX}.

\begin{corollary}\label{simultaneous2}
There exists a family  of constants $\{\kappa_p' : 1<p<2\} \subset \mathbb{R}_+$  satisfying the following:
 if 
 $x\in L_1(\M) \cap L_2(\M)$, then there exist martingales $\displaystyle{y \in \cap_{1<p<2} \H_p^c(\M)}$  and $\displaystyle{z \in \cap_{1<p<2} \H_p^r(\M)}$ such that:
\begin{enumerate}[{\rm(i)}]
\item  $x =y+z $;
\item for every $1<p< 2$, the following inequality holds:
\[
 \big\| y \big\|_{\H_p^c} + \big\| z
\big\|_{\H_p^r}\leq \kappa_p' \big\| x \big\|_p.
\]
\end{enumerate}
\end{corollary}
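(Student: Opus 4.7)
The plan is to start from the simultaneous decomposition provided by Theorem~\ref{simultaneous} and then absorb the diagonal part into the column (or row) Hardy space using the dominations provided by Proposition~\ref{comparison} and the comparison $\h_p^s \subseteq \H_p^s$ recalled just before the statement.

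More concretely, for $x \in L_1(\M) \cap L_2(\M)$, Theorem~\ref{simultaneous} yields martingales $a \in \bigcap_{1<p<2} \h_p^d(\M)$, $b \in \bigcap_{1<p<2} \h_p^c(\M)$, and $c \in \bigcap_{1<p<2} \h_p^r(\M)$ with $x = a + b + c$ and, for every $1 < p < 2$,
\[
\|a\|_{\h_p^d} + \|b\|_{\h_p^c} + \|c\|_{\h_p^r} \leq \kappa_p \|x\|_p.
\]
I would set $y := a + b$ and $z := c$ and verify the two membership and norm claims. For the row part, \cite[Theorem~7.1]{JX} (restated just before the corollary) gives $\|c\|_{\H_p^r} \leq 2^{1/p} \|c\|_{\h_p^r}$, so $z \in \bigcap_{1<p<2} \H_p^r(\M)$. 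For the column part, Proposition~\ref{comparison} applied with $E = L_p$ (so $q_E = p < 2$) yields a constant $c_p$ with $\|a\|_{\H_p^c} \leq c_p \|a\|_{\h_p^d}$, while the same comparison as above gives $\|b\|_{\H_p^c} \leq 2^{1/p} \|b\|_{\h_p^c}$; hence $y \in \bigcap_{1<p<2} \H_p^c(\M)$ with
\[
\|y\|_{\H_p^c} \leq c_p \|a\|_{\h_p^d} + 2^{1/p} \|b\|_{\h_p^c}.
\]

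Combining the two estimates with the simultaneous bound from Theorem~\ref{simultaneous} produces
\[
\|y\|_{\H_p^c} + \|z\|_{\H_p^r} \leq \max\{c_p, 2^{1/p}\}\bigl(\|a\|_{\h_p^d} + \|b\|_{\h_p^c} + \|c\|_{\h_p^r}\bigr) \leq \kappa_p' \|x\|_p,
\]
with $\kappa_p' := \max\{c_p, 2^{1/p}\} \kappa_p$, which is the desired inequality. Since each ingredient (Theorem~\ref{simultaneous}, Proposition~\ref{comparison}, and the $\h \hookrightarrow \H$ inclusion) has already been established, there is no real obstacle: the argument is merely a bookkeeping combination of the three facts. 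The only mildly delicate point is checking that the $p$-dependence is consistent across all three estimates, but since each is stated separately for every $1 < p < 2$, the simultaneous claim passes through without difficulty.
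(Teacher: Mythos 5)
Your argument is correct and is exactly the route the paper takes: it invokes Theorem~\ref{simultaneous} to write $x=a+b+c$, controls the diagonal term via Proposition~\ref{comparison} (with $E=L_p$, where the constant is in fact $1$ since $L_{p/2}$ is $p/2$-normed), and absorbs $b$ and $c$ via the contractive-up-to-$2^{1/p}$ inclusions $\h_p^c\subset\H_p^c$ and $\h_p^r\subset\H_p^r$ from \cite[Theorem~7.1]{JX}. The bookkeeping with $y=a+b$, $z=c$ and $\kappa_p'=\max\{c_p,2^{1/p}\}\kappa_p$ is exactly what is intended, so there is nothing to add.
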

A direct alternative approach to Corollary~\ref{simultaneous2} is to use the weak-type inequality for square functions  from \cite{Ran18} (see also \cite{PR}) and then follow 
the same line of reasoning as in the proof of Theorem~\ref{simultaneous}. The above result   is  closely related to another type of simultaneous decompositions considered by Junge and Perrin in \cite[Theorem~3.3]{Junge-Perrin}.

\medskip

We note that  Part~{\rm I}  of the proof of Theorem~\ref{main} can  also be deduced from Proposition~\ref{comparison} and the noncommutative Burkholder-Gundy  inequalities for symmetric spaces from   \cite{Dirk-Pag-Pot-Suk} and \cite{Jiao2}.

\medskip

As a final remark, we provide an example  involving Orlicz functions.
Let $\Phi$ be an Orlicz function on $[0,\infty)$ i.e, a continuous increasing   convex function on $[0,\infty)$ with $\Phi(0)=0$ and $\lim_{t \to \infty}\Phi(t)=\infty$.  Two standard indices associated to the Orlicz function $\Phi$ are defined  as follows: let 
\[
M_\Phi(t) =\sup_{s>0} \frac{\Phi(ts)}{\Phi(s)}, \quad t \in [0,\infty)
\]
and  
\[
p_\Phi :=\lim_{t \to 0^+} \frac{\log M_\Phi(t)}{\log t}, \quad q_\Phi:=\lim_{t \to \infty} \frac{\log M_\Phi(t)}{\log t}.
\]
Then $1\leq p_\Phi \leq q_\Phi \leq \infty$.
The Orlicz space $L_\Phi$  is the set of all Lebesgue measurable functions $f$ defined on $(0,\infty)$ such that for some  constant $c>0$,
\[
\int_0^\infty \Phi\Big({|f(t)|}/{c}\Big)\ dt <\infty.
\]
If we equip $L_\Phi$ with the Luxemburg norm:
\[
\big\|f\big\|_{L_\Phi}=\inf\Big\{ c>0: \int_0^\infty \Phi\Big({|f(t)|}/{c}\Big)\ dt \leq 1\Big\},
\]
then $L_\Phi$ is a symmetric Banach function space  with the Fatou property.
Moreover, the Boyd indices of $L_\Phi$ coincide with the indices $p_\Phi$ and $q_\Phi$ (see for instance \cite{Maligranda}). Thus, Theorem~\ref{main} and Corollary~\ref{davis} apply to  martingales in  the noncommutative space $L_\Phi(\M)$ whenever $1<p_\Phi \leq  q_\Phi <2$ or $2<p_\Phi\leq q_\Phi<\infty$. This example also motivates the consideration of the so-called $\Phi$-moment inequalities involving conditioned square functions. This  direction will be explored in a forthcoming article.  We refer to  \cite{Bekjan-Chen,Dirksen, Dirksen-Ricard} for  recent progress on moment inequalities for noncommutative martingales.

\medskip

\noindent {\bf Acknowledgements.}
This work was carried out during  the second-named author's visit to  Miami University. She would like to express her gratitude to the Department of Mathematics of Miami University for its warm hospitality. The authors are indebted to  the referee  for helpful  suggestions that improved the presentation of the paper.


\end{document}